\newtheorem{theorem}{Theorem}[section]
\newtheorem{lemma}[theorem]{Lemma}
\newtheorem{proposition}[theorem]{Proposition}
\newtheorem{corollary}[theorem]{Corollary}
\newenvironment{proof}[1][Proof]{\begin{trivlist}
\item[\hskip \labelsep {\bfseries #1}]}{\end{trivlist}}
\newenvironment{definition}[1][Definition]{\begin{trivlist}
\item[\hskip \labelsep {\bfseries #1}]}{\end{trivlist}}
\newenvironment{remarks}[1][Remarks]{\begin{trivlist}
\item[\hskip \labelsep {\bfseries #1}]}{\end{trivlist}}
\newenvironment{remark}[1][Remark]{\begin{trivlist}
\item[\hskip \labelsep {\bfseries #1}]}{\end{trivlist}}
\newenvironment{conjecture}[1][Conjecture]{\begin{trivlist}
\item[\hskip \labelsep {\bfseries #1}]}{\end{trivlist}}
\newcommand{\qed}{\nobreak \ifvmode \relax \else
     \ifdim\lastskip<1.5em \hskip-\lastskip
      \hskip1.5em plus0em minus0.5em \fi \nobreak
      \vrule height0.75em width0.5em depth0.25em\fi}
 \DeclareFontFamily{U}{wncy}{}
    \DeclareFontShape{U}{wncy}{m}{n}{<->wncyr10}{}
    \DeclareSymbolFont{mcy}{U}{wncy}{m}{n}
    \DeclareMathSymbol{\Sh}{\mathord}{mcy}{"58}
\date{}
  \title{\textbf{On a Gross conjecture over imaginary quadratic fields}}
  \author{Saad El Boukhari\footnote{S. El Boukhari, Moulay Ismail University of Meknès, Department of Math., B.P. 11201 Zitoune, Meknès, Morocco.
\textit{E-mail:} saadelboukhari1234@gmail.com}}
\begin{document}
\maketitle
\date{}
\begin{abstract}
Let $k$ be an imaginary quadratic number field, and $F/k$  a finite abelian extension of  Galois group $G$.
We show that a Gross conjecture concerning the leading terms of Artin $L$-series holds for $F/k$ and all rational primes which are split in $k$ and which do not divide $6$.
\end{abstract}

\noindent\textit{2020 Mathematics Subject Classification: } 11G40, 19F27, 11R70.\\
\noindent\textit{Key words and phrases: } number field, algebraic $K$-theory, Beilinson regulator, Artin $L$-function, Equivariant Tamagawa Number conjecture.

\section{Introduction}
Let $F$ be a number field and $O_F$ its ring of integers. Let $r$ denote a strictly negative integer. The Quillen higher algebraic $K$-groups of the ring of integers $O_F$  have the following property :\\
\textbf{(Borel)}
\textit{The groups $K_{-2r}(O_F)$ are finite and the groups $K_{1-2r}(O_F)$ are of finite type over $\mathbb{Z}$}

$$ \mathrm{rank}_{\mathbb{Z}}(K_{1-2r}(F)):= d_{r} =  \left\{
\begin{array}{rl}
r_{1}+r_{2} \mathrm{\;\;\;\; if\; r \; is\; even} \\
r_{2} \;\mathrm{\;\;\;\; if\; r \; is\; odd} 
\end{array}
\right.$$
\noindent \textit{where $r_{1}$ and $r_{2}$ denote respectively the number of real and  complex places of $F$.}\\
It is known in literature that for $r<0$, the odd degree $K$-groups $K_{1-2r}(O_F)$
are analogues of the unit group $O^{*}_F$ of $O_F$ and the even degree finite $K$-groups $K_{-2r}(O_F)$ are analogues of the class group $\mathrm{Cl}_F$ of $F$.\\
Suppose that the number field $F$ is a finite abelian extension of some number field $k$ and let $G:=\mathrm{Gal}(F/k)$. We also write $\widehat{G}:=\mathrm{Hom}(G,\mathbb{C}^{\times})$ for the (multiplicative) group of one dimensional (irreducible) complex characters of $G$. Let $\mathrm{Ram}(F/k)$ denote the set of finite places of $k$ which ramify in $F/k$ and $S_{\infty}$ the set of archimedean places of $k$. If
$\chi\in\widehat{G}$, the Artin $L$-function attached to
$\chi$  is defined for $\{s\in \mathbb{C}:\; \mathrm{Re}(s)>1\}$ by
\begin{equation*}
L(s,\chi)=\prod_{\mathfrak{p}\not\in \mathrm{Ram}(F/k)\cup S_\infty}(1-\chi(\mathrm{Frob}_{\mathfrak{p}})\mathrm{N}\mathfrak{p}^{-s})^{-1}
\end{equation*}
where $\mathrm{Frob}_{\mathfrak{p}}\in G$ is the Frobenius of the
(unramified) prime $\mathfrak{p}$. This function can be analytically
continued to a meromorphic function on $\mathbb{C}$.\\

Gross formulated in the 1970s a  conjecture which asserts that the leading term at any strictly negative integer of the Artin $L$-function  should be equal to within an undetermined algebraic factor to the image by the Beilinson regulator (see Subsection \ref{BeilinsonRegSec} below) of a certain element in odd degree higher algebraic $K$-groups (Gross's conjecture was published more recently in \cite{GrossConj}). Gross's conjecture is the natural analogue in higher algebraic $K$-theory and strictly negative integers of Stark's Conjecture (e.g. \cite{Tate1}) which provides for a certain appropriate set $S$ of primes of $k$ special $S$-units linked to the special value at $s=0$ of the $L$-function $L(s,\chi)$. \\
 In the absolute abelian case (i.e. $k=\mathbb{Q}$), Gross's conjectural units are explicitly given by Beilinson's elements \cite{beilinson1} and $p$-adically by the Beilinson-Deligne-Soulé elements (e.g.   \cite{Soule}). Their applications in this case are many and touch various subjects in the number theory of higher algebraic $K$-groups (e.g. \cite{KNF}, \cite{SE1}, \cite{SE2}  and others). In the non-absolute abelian case, Gross's conjecture doesn't provide any particular formula for the definition of these special units.\\
The next natural step is to look for these special units in the "almost" abelian case, i.e. the case where $k$ is an imaginary quadratic field. This is exactly the aim of this work: In Section \ref{section1} we provide a definition of Gross's elements over an imaginary quadratic field using the Beilinson regulator. In Section \ref{section5} we show that this latter is a consequence of the Equivariant Tamagawa Number conjecture (or the ETNc for short, which is formulated in Section \ref{section2}). Then we consider the case in which the ETNc is a theorem to prove Gross's conjecture for $F/k$.

\subsection{Notations}
The notations are:

\begin{itemize}
\item $F/k$ is a finite abelian extension of number fields with $k$ imaginary quadratic, and $G:=\mathrm{Gal}(F/k)$. 
\item $\hat{G}:=\mathrm{Hom}(G,\mathbb{C}^{\times})$ is the (multiplicative) group of one dimensional (irreducible) complex characters of the finite group $G$.
\item $E$ is the number field generated over $\mathbb{Q}$ by all values of characters $\chi\in\hat{G}$, and $O$ its ring of integers.
\item  $r<0$ is a rational integer.
\item  $E(r)_{F}$ is the (Tate) motive: $E(r)_{F}:=h^{0}(\mathrm{Spec}(F))_{E}(r).$
\item $p$ a rational prime number.
\item $E_p:=E\otimes_{\mathbb{Z}}\mathbb{Z}_p$ and $O_p:=O\otimes_{\mathbb{Z}}\mathbb{Z}_p$.
\item $S$ is a finite set of places of $k$ containing the set $S_p$ (of $p$-places and infinite places) and all the finite places which ramify in $F/k$. Let $S_f$ be the subset of finite places of $S$.
\item If $A$ is any finite set, we write ${\mid}A{\mid}$ for its cardinal (i.e. the number of its elements). 
\item For any finite set $S^{'}$ of finite places of $k$
we write $\varepsilon_{S^{'}}(r):=\prod_{v\in S^{'}}(1-\mathbf{N}v^{-r}\mathrm{Frob}_{v}e_{I_{v}})$.\\
where $\mathrm{Frob}_{v}$ is any representative in $G$ of the Frobenius map  associated to the finite place $v$, and $e_{I_{v}}$ is the idempotent of $\mathbb{Q}[G]$ which corresponds to the inertia subgroup $I_v$ $$e_{I_{v}}=\frac{\sum_{\sigma\in I_v}\sigma}{{\mid}I_v{\mid}}$$

\item For any number field $K$, we write $\overline{K}$ for a fixed algebraic closure of $K$ and $O_K$ for the ring of integers of $K$. For any finite set $S^{'}$ of places of $K$ containing the set $S_\infty$ of archimedean places of $K$, we denote by $O_{K,S^{'}}$ the ring of $S^{'}$-integers of $K$ (i.e. the elements of $K$ whose $\mathfrak{p}$-adic valuation is positive for all $\mathfrak{p}\not\in S^{'}$).
\item If $A$ is a finite $\mathrm{Gal}(\overline{\mathbb{Q}}/\mathbb{Q})$-module and $i$ is a rational integer, we write $A(i)$ for the "Tate twist" of $A$. This is the group $A$ endowed with the modified $\mathrm{Gal}(\overline{\mathbb{Q}}/\mathbb{Q})$-action $g.x:=\chi_{cyc}(g)^{i}g(x)$ for all $g\in \mathrm{Gal}(\overline{\mathbb{Q}}/\mathbb{Q})$ and $x\in A$ where $\chi_{cyc}$ is the cyclotomic character (e.g. \cite{NSW}, Definition (7.3.6)).
\item If $A$ is an $O_p$ module (resp. $E$-module, $E_p$-module) we let $A^{\vee}$ denote $\mathrm{Hom}(A, O_p)$ (resp. $\mathrm{Hom}(A, E)$, $\mathrm{Hom}(A, E_p)$). If $X$ is a complex of $O_p$ modules (resp. $E$-modules, $E_p$-modules), we define similarly $X^\vee$ by replacing $\mathrm{Hom}(X,\;)$ above by $\mathrm{RHom}(X,\;)$.
\end{itemize}
\section{Gross's conjecture in higher algebraic $K$-theory over an imaginary quadratic field}\label{section1}
\subsection{The Beilinson regulator}\label{BeilinsonRegSec}
Let $r<0$ be a rational integer. The Beilinson regulator defined over the $K$-group of the field of complex numbers (e.g. \cite{Rapoport} for more details) is a map 
$$\rho_{\mathbb{C}}^{r}: K_{1-2r}(\mathbb{C})\rightarrow H^{1}_{D}(\mathrm{Spec}(\mathbb{C}),(2\pi{i})^{1-r}\mathbb{R})\cong (2\pi{i})^{-r}\mathbb{R}$$
where $H^{1}_{D}()$ is the first group of Deligne's Cohomology and $i:=\sqrt{-1}$.

\noindent For a number field $F$ we compose with the map $$K_{1-2r}(O_F)=K_{1-2r}(F) \rightarrow \prod_{\sigma: F\rightarrow \mathbb{C}}K_{1-2r}(\mathbb{C}),$$
where the product is taken over all embeddings $\sigma \in\mathrm{Hom}(F,\mathbb{C})$ of $F$ in the field of complex numbers. We obtain a map
$$\rho_{F}^{r}: K_{1-2r}(O_F)\rightarrow  (X_{F}\otimes (2\pi{i})^{-r}\mathbb{R})^{+},$$
where $X_{F}:=\mathbb{Z}[\mathrm{Hom}(F,\mathbb{C})]$ (the free abelian group over the set $\mathrm{Hom}(F,\mathbb{C})$) and $(X_{F}\otimes (2\pi{i})^{-r}\mathbb{R})^{+}$ is the "plus part" of $(X_{F}\otimes (2\pi{i})^{-r}\mathbb{R})$ i.e. the submodule that is invariant under the action of complex conjugation.\\
The image of $\rho_{F}^{r}$ is a complete lattice of the $\mathbb{R}$-vector-space $(X_{F}\otimes (2\pi{i})^{-r}\mathbb{R})^{+}$. 
The kernel of the Beilinson regulator $\rho_{F}^{r}$ verifies
$$\mathrm{ker}(\rho_{F}^{r})\subseteq K_{1-2r}(O_F)_{\mathrm{tors}}$$
where $K_{1-2r}(O_F)_{\mathrm{tors}}$ is the $\mathbb{Z}$-torsion subgroup of $K_{1-2r}(O_F)$. Hence, if  we write $K_{1-2r}(O_F)_{/\mathrm{tors}}$ for the torsion-free quotient of $K_{1-2r}(O_F)$, then $\rho_{F}^{r}$ induces the following injective map:
\begin{equation*}
\xymatrix@=2pc{\rho_{F}^{r}:
K_{1-2r}(O_F)_{/\mathrm{tors}}\ar@{^{(}->}[r]&
(X_{F}\otimes (2\pi{i})^{-r}\mathbb{R})^{+}}
\end{equation*}
which becomes an isomorphism if we tensor the left hand side with $\mathbb{R}$.
\subsection{Special "Gross units" in higher algebraic $K$-groups over an imaginary quadratic field}
We regard the set $\mathrm{Hom}(F, \mathbb{C})$ of embeddings of $F$ in the field of complex numbers $\mathbb{C}$ as a left $G\times \mathrm{Gal}(\mathbb{C}/\mathbb{R})$-module by setting  $(g\times w)(\sigma)= w\circ \sigma\circ g^{-1}$, for all $g\in G$, $w\in \mathrm{Gal}(\mathbb{C}/\mathbb{R})$ and $\sigma\in \mathrm{Hom}(F, \mathbb{C})$.\\
Let $\mathbb{Z}[\chi]$ denote the ring of values of the character $\chi$.
By extension of scalars, we can rewrite the Beilinson regulator $\rho^{r}_{F}$ as
$$\rho^{r}_{F}: K_{1-2r}(O_F)_{/\mathrm{tors}}\otimes_{\mathbb{Z}}\mathbb{Z}[\chi]\hookrightarrow (\prod_{\sigma :F\rightarrow\mathbb{C}}(2\pi i)^{-r}\mathbb{R})^{+}\otimes_{\mathbb{Z}}\mathbb{Z}[\chi]$$
This map is still injective since $\mathbb{Z}[\chi]$ is a flat $\mathbb{Z}$-module.
For any character $\chi\in\hat{G}$, let $L_{S}(s,\chi)$ denote the $S$-truncated Artin $L$-function defined for $\{s\in \mathbb{C}:\; \mathrm{Re}(s)>1\}$ by
\begin{equation*}
L_{S}(s,\chi)=\prod_{\mathfrak{p}\not\in
S}(1-\chi(\mathrm{Frob}_{\mathfrak{p}})\mathbf{N}\mathfrak{p}^{-s})^{-1}
\end{equation*}
This function can be analytically
continued to a meromorphic function on $\mathbb{C}$.
We denote by $L_{S}^{'}(s,\chi)$ the first derivative of $L_{S}(s,\chi)$. We will use the following lemma:
\begin{lemma}\label{lemma3.1}
Let $k$ be imaginary quadratic. Then there is a canonical isomorphism of $\mathbb{R}[G]$-modules
$$ \iota :(\prod_{\sigma: F\rightarrow \mathbb{C}}(2\pi i)^{-r}\mathbb{R})^{+}\cong \mathbb{R}[G]$$
\end{lemma}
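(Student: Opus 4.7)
The plan is to exploit the fact that $k$ is imaginary quadratic in order to show that $G\times\mathrm{Gal}(\mathbb{C}/\mathbb{R})$ acts simply transitively on $\mathrm{Hom}(F,\mathbb{C})$. First I would count: $|\mathrm{Hom}(F,\mathbb{C})|=[F:\mathbb{Q}]=2|G|$. The two commuting actions---of $G$ by $g\cdot\sigma=\sigma\circ g^{-1}$ and of $c\in\mathrm{Gal}(\mathbb{C}/\mathbb{R})$ by $c\cdot\sigma=c\circ\sigma$---are both free: the $G$-action for the usual reason, and the conjugation action because no $\sigma$ can land in $\mathbb{R}$ (otherwise $\sigma|_{k}$ would be a real embedding of the imaginary quadratic field $k$). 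Since $|G\times\mathrm{Gal}(\mathbb{C}/\mathbb{R})|=2|G|$, the product action is then simply transitive, and fixing any base embedding $\sigma_{0}:F\hookrightarrow\mathbb{C}$ yields a $(G\times\mathrm{Gal}(\mathbb{C}/\mathbb{R}))$-equivariant bijection $(g,c)\mapsto c\circ\sigma_{0}\circ g^{-1}$ from $G\times\mathrm{Gal}(\mathbb{C}/\mathbb{R})$ to $\mathrm{Hom}(F,\mathbb{C})$.

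Passing to free $\mathbb{Z}$-modules and tensoring with $(2\pi i)^{-r}\mathbb{R}$, this bijection furnishes an $\mathbb{R}[G]$-linear identification
$$X_{F}\otimes_{\mathbb{Z}}(2\pi i)^{-r}\mathbb{R}\;\cong\;\mathbb{R}[G]\otimes_{\mathbb{R}}\bigl(\mathbb{R}[\mathrm{Gal}(\mathbb{C}/\mathbb{R})]\otimes_{\mathbb{R}}(2\pi i)^{-r}\mathbb{R}\bigr)$$
on which $\mathrm{Gal}(\mathbb{C}/\mathbb{R})$ acts trivially on the $\mathbb{R}[G]$-factor and diagonally on the right-hand factor. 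Since $\overline{(2\pi i)^{-r}}=(-1)^{-r}(2\pi i)^{-r}$, conjugation acts on $(2\pi i)^{-r}\mathbb{R}$ by the scalar $(-1)^{r}$, and a direct computation then shows that the invariants $(\mathbb{R}[\mathrm{Gal}(\mathbb{C}/\mathbb{R})]\otimes_{\mathbb{R}}(2\pi i)^{-r}\mathbb{R})^{+}$ form the one-dimensional real subspace spanned by $1+(-1)^{r}c$. Since the $G$-action on the outer factor commutes with conjugation, taking invariants distributes across the tensor product, and collapsing this one-dimensional factor to $\mathbb{R}$ produces the desired isomorphism $\iota:(X_{F}\otimes(2\pi i)^{-r}\mathbb{R})^{+}\cong\mathbb{R}[G]$.

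The argument is essentially bookkeeping; the only real subtlety is tracking the sign $(-1)^{r}$ arising from conjugation on the Tate twist and confirming that it matches the swap of the two $G$-orbits on $\mathrm{Hom}(F,\mathbb{C})$, so that the fixed subspace is a free $\mathbb{R}[G]$-module of rank one (rather than zero). I do not anticipate any genuine obstacle beyond this sign check; the lemma ultimately encodes the familiar principle that a CM base field of degree two forces the archimedean regulator module to be free of rank one over $\mathbb{R}[G]$.
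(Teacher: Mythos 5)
Your proposal follows essentially the same route as the paper: the paper's proof simply asserts that for each embedding $\hat{\sigma}\in\mathrm{Hom}(F,\mathbb{C})$ either $\hat{\sigma}$ or $\tau\hat{\sigma}$ restricts to an element of $G$ (i.e.\ that $G\times\mathrm{Gal}(\mathbb{C}/\mathbb{R})$ acts simply transitively on $\mathrm{Hom}(F,\mathbb{C})$) and then writes down the explicit map on invariants, with the same sign $a_{\sigma}=(-1)^{r}a_{\tau\sigma}$ that you track. One step of your deduction needs tightening: you infer simple transitivity of the product action from the freeness of the two individual commuting actions plus a cardinality count, but two commuting free actions need not assemble into a free action of the product group (an element $(g,\tau)$ with $g\neq e$ could a priori fix an embedding even though neither $g$ nor $\tau$ alone does). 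The correct argument is the one you already have the ingredient for: restricting to $k$, the $G$-action preserves $\hat{\sigma}|_{k}$ while $\tau$ sends it to the other of the two conjugate embeddings of the imaginary quadratic field $k$, so $\tau\circ\hat{\sigma}\circ g^{-1}=\hat{\sigma}$ is impossible for any $g$; combined with freeness of the $G$-action and the count $|\mathrm{Hom}(F,\mathbb{C})|=2|G|$ this gives simple transitivity. With that repair the rest of your bookkeeping, including the identification of the $\tau$-invariants with the line spanned by $1+(-1)^{r}c$, is correct and matches the paper.
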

\begin{proof}
If $k$ is imaginary quadratic, then for each embedding $\hat{\sigma}\in \mathrm{Hom}(F, \mathbb{C})$, either $\hat{\sigma}$ or $\tau\hat{\sigma}$ identifies with an automorphism $\sigma\in G$.\\
The isomorphism $\iota$ is explicitly given by mapping,  for each such embedding $\hat{\sigma}\in \mathrm{Hom}(F, \mathbb{C})$, the element $(0,..,(2\pi i)^{-r}a_{\hat{\sigma}},..,(2\pi i)^{-r}a_{\tau\hat{\sigma}},..,0)$ (with $a_{\sigma}=\pm a_{\tau\sigma}$ depending on the parity of $r$) to $a_{\hat{\sigma}}\sigma^{-1}$ if $\hat{\sigma}=\sigma$ and to $a_{\tau\hat{\sigma}}\sigma^{-1}$ if $\tau\hat{\sigma}=\sigma$.\hspace*{\fill}$\Box$
\end{proof}
Using the Lemma above we can write the Beilinson regulator map as:
$$\rho^{r}_{F}: K_{1-2r}(O_F)_{/\mathrm{tors}}\otimes_{\mathbb{Z}}\mathbb{Z}[\chi] \hookrightarrow\mathbb{R}[G]\otimes_{\mathbb{Z}}\mathbb{Z}[\chi].$$
The following conjecture can be considered as Gross's conjecture stated in \cite{GrossConj} for the extension $F/k$:
\begin{conjecture}{\ref{conjecture0}}\label{conjecture0}
\textit{There exists a unique element $\epsilon(\chi,S)\in K_{1-2r}(O_F)_{/\mathrm{tors}}\otimes_{\mathbb{Z}}\mathbb{Z}[\chi]$  such that
$$ \rho^{r}_{F}(\epsilon(\chi,S)) =  
w_{1-r}(F^{\mathrm{ker}(\chi)})L^{'}_{S}(r,\chi^{-1}){\mid}G{\mid} e_{\chi}\in \mathbb{R}[G]\otimes_{\mathbb{Z}}\mathbb{Z}[\chi]
$$
where:
\begin{itemize}
\item[1.]$w_{1-r}(F^{\mathrm{ker}(\chi)}):={\mid}H^{0}(\mathrm{Gal}(\overline{\mathbb{Q}}/F^{\mathrm{ker}(\chi)}), \mathbb{Q}/\mathbb{Z}(1-r)){\mid}$. 
\item[2.]$e_{\chi} := \frac{1}{\mid{G}\mid}\Sigma_{\sigma\in G}\chi^{-1}(\sigma)\sigma$ is the idempotent associated with $\chi$.
\end{itemize}
}
\end{conjecture}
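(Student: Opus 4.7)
Since this statement is the main conjecture of the paper (and not a theorem), my proof plan follows the strategy sketched in the introduction: reduce it to the Equivariant Tamagawa Number Conjecture for the motive $E(r)_{F}$ and then invoke the cases in which the ETNc is known.

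\textbf{Step 1: Uniqueness.} I would first dispose of uniqueness, which is essentially formal. The Beilinson regulator $\rho^{r}_{F}$ is injective on $K_{1-2r}(O_F)_{/\mathrm{tors}}$, and since $\mathbb{Z}[\chi]$ is $\mathbb{Z}$-flat, the induced map
\[
\rho^{r}_{F}:K_{1-2r}(O_F)_{/\mathrm{tors}}\otimes_{\mathbb{Z}}\mathbb{Z}[\chi]\hookrightarrow \mathbb{R}[G]\otimes_{\mathbb{Z}}\mathbb{Z}[\chi]
\]
remains injective. Thus any element with the prescribed image is unique.

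\textbf{Step 2: Reduction to $\chi$-components.} Using the orthogonal central idempotents $e_{\chi}$ of $\mathbb{C}[G]$, one decomposes $\mathbb{R}[G]\otimes_{\mathbb{Z}}\mathbb{Z}[\chi]$ character by character. Because $k$ is imaginary quadratic, the rank formula from Borel gives $d_{r}=r_{2}(F)=|G|$ in both parity cases for $r<0$; thus $K_{1-2r}(O_F)\otimes_{\mathbb{Z}}\mathbb{R}$ has $\mathbb{R}$-dimension equal to $\dim_{\mathbb{R}}\mathbb{R}[G]$, so after tensoring with $\mathbb{R}$ the regulator becomes an $\mathbb{R}[G]$-isomorphism (using Lemma \ref{lemma3.1} to identify the target). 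Consequently there is a unique element of $K_{1-2r}(O_F)_{/\mathrm{tors}}\otimes_{\mathbb{Z}}\mathbb{C}$ whose regulator equals $w_{1-r}(F^{\ker\chi})L'_{S}(r,\chi^{-1})|G|e_{\chi}$, and the task is to show that it descends to $K_{1-2r}(O_F)_{/\mathrm{tors}}\otimes_{\mathbb{Z}}\mathbb{Z}[\chi]$.

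\textbf{Step 3: Existence via the ETNc.} The plan is to deduce the descent, and the correctness of the leading-term formula, from the ETNc for the Tate motive $E(r)_{F}$. The ETNc predicts an equality, up to an $O[G]$-basis, between a fundamental line built from $p$-adic realizations of $E(r)_{F}$ and a generator coming from $L^{*}_{S}(r,\chi^{-1})$. Unwinding the fundamental line via the Chern class isomorphism with $K_{1-2r}(O_F)\otimes\mathbb{Z}_{p}$ and the long exact sequence relating motivic and étale cohomology (where the $\mathbb{Q}/\mathbb{Z}(1-r)$-torsion contributes exactly the factor $w_{1-r}(F^{\ker\chi})$), one obtains an integral element whose archimedean image under $\rho^{r}_{F}$, after the identification of Lemma \ref{lemma3.1}, is the desired multiple of $e_{\chi}$. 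This is precisely the content of Section \ref{section5} of the paper.

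\textbf{Main obstacle.} The hard part is of course that the ETNc is not known in full generality. To obtain an unconditional theorem one must restrict to the primes $p$ for which the ETNc for $E(r)_{F}$ is established; for $k$ imaginary quadratic these are the primes split in $k$ and coprime to $6$, where the Euler system of elliptic units (Deninger, Kings, Huber--Kings, Johnson, Bley, etc.) provides the required input. The constraints "split" and "prime to $6$" come, respectively, from the need for a local decomposition of the Iwasawa main conjecture via elliptic units and from controlling torsion/$\mu$-invariants in the elliptic-unit construction; overcoming either would require genuinely new ingredients.
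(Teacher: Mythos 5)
Your plan follows essentially the same route as the paper: uniqueness from the injectivity of $\rho^{r}_{F}$ after flat base change to $\mathbb{Z}[\chi]$, the identification of the target with $\mathbb{R}[G]$ via Lemma \ref{lemma3.1} using $r_{2}(F)=|G|$, and existence deduced from the rationality part and the integral part of the ETNc for $E(r)_{F}$ (Propositions \ref{theorem4.4} and \ref{theorem4.8}), made unconditional only for primes $p$ split in $k$ with $p\nmid 6$ via Johnson-Leung's theorem. You also correctly flag that this yields only the $p$-primary parts of the conjecture (Theorem \ref{Thm3.1}) rather than the full integral statement, which is exactly the scope of what the paper establishes.
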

\begin{remark}
The unicity of the element $\epsilon(\chi,S)$ is guaranteed by the injectivity of the map $\rho^{r}_{F}$ over $K_{1-2r}(O_F)_{/\mathrm{tors}}\otimes_{\mathbb{Z}}\mathbb{Z}[\chi]$
\end{remark}
\section{Main result}
The image of $K_{1-2r}(O_F)$ by the Beilinson regulator is a complete $\mathbb{Z}$-lattice $\mathcal{T}$ in the $\mathbb{R}$-vector space $\mathbb{R}[G]$ (by Subsection \ref{BeilinsonRegSec} and Lemma \ref{lemma3.1}). We can  tensor $\mathcal{T}$ by the ring $O_p$, and define a $p$-adic regulator by extension of scalars over $O_p$
$$\rho^{r}_{F, p}: K_{1-2r}(O_F)_{/\mathrm{tors}}\otimes_{\mathbb{Z}} O_p\rightarrow \mathcal{T}\otimes_{\mathbb{Z}} O_p$$

\noindent The main result of this work is the following theorem
\begin{theorem}\label{Thm3.1}
Suppose that $p$ is a rational prime which is split in $k$ and such that $p\nmid 6$.
The following holds
\begin{itemize}
\item[(1)] The element $w_{1-r}(F^{\mathrm{ker}(\chi)})L_{S}^{'}(r,\chi^{-1}){\mid}G{\mid} e_{\chi}$ belongs to the $E$-vector space $\mathcal{T}\otimes E$.
\item[(2)] Suppose that the previous condition is fulfilled. Then, there exists a (unique) element $\epsilon(\chi, S, p)\in K_{1-2r}(O_F)\otimes O_p$ such that
$$ \rho^{r}_{F, p}(\epsilon(\chi, S, p)) =  
w_{1-r}(F^{\mathrm{ker}(\chi)})L_{S}^{'}(r,\chi^{-1}){\mid}G{\mid} e_{\chi},
$$
\end{itemize}
\end{theorem}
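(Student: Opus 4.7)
The plan is to deduce Theorem \ref{Thm3.1} from the $p$-part of the Equivariant Tamagawa Number conjecture (ETNc) for the pair $(E(r)_F, O)$, whose precise formulation will be given in Section \ref{section2}. Two ingredients are central: first, the implication \emph{ETNc at $p$ implies the $p$-part of Gross's conjecture for $F/k$}, which will be proved in Section \ref{section5}; second, the fact that under the standing hypotheses (\textit{$p$ split in $k$ and $p\nmid 6$}) the ETNc at $p$ for $E(r)_F$ is itself a theorem, by combined work going back to Bley, Huber--Kings and Burns--Flach, ultimately resting on Rubin's proof of the two-variable Iwasawa main conjecture over imaginary quadratic fields via the Euler system of elliptic units. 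The theorem will then follow by applying the first implication in a case where the ETNc is available.

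For part (1), I would use the $E$-rationality content of the ETNc, which is essentially the equivariant Beilinson conjecture for $E(r)_F$. By Borel's theorem and the identification of $(\prod_{\sigma}(2\pi i)^{-r}\mathbb{R})^{+}$ with $\mathbb{R}[G]$ supplied by Lemma \ref{lemma3.1}, the image $\mathcal{T}$ is a full $\mathbb{Z}$-lattice of $\mathbb{R}[G]$, and therefore $\mathcal{T}\otimes E$ is an $E$-rational structure on $\mathbb{R}[G]\otimes E$. The equivariant leading-term formula forces the corrected value $w_{1-r}(F^{\mathrm{ker}(\chi)}) L_{S}'(r,\chi^{-1})|G|e_{\chi}$ to lie in this $E$-structure, which is exactly the claim.

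For part (2), I would assume (1) and tensor everywhere by $O_p$. The $p$-integral content of the ETNc, once the canonical trivialisations of the relevant determinant are identified using the Beilinson regulator on the Archimedean side and Soulé's $p$-adic Chern character on the $p$-adic side, asserts precisely that the $E$-rational element produced in (1) is in the image under $\rho^{r}_{F,p}$ of some element of $K_{1-2r}(O_F)\otimes O_p$; that element is the desired $\epsilon(\chi,S,p)$. Uniqueness is automatic from the injectivity of $\rho^{r}_{F,p}$ on the torsion-free quotient $K_{1-2r}(O_F)_{/\mathrm{tors}}\otimes O_p$, the $O_p$-linear extension of the injection recalled in Subsection \ref{BeilinsonRegSec}.

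The main obstacle is really the implication that will be established in Section \ref{section5}: one has to translate the abstract identification of the fundamental line provided by the ETNc (in terms of determinants of cohomology complexes) into the explicit character-by-character formula of Conjecture \ref{conjecture0}, and to verify that this identification is compatible with the normalisation fixed by Lemma \ref{lemma3.1}. The restriction to split primes with $p\nmid 6$ is the precise range under which Rubin's two-variable main conjecture is available and its descent to the finite layer $F/k$ is free of $2$- and $3$-torsion obstructions; it is therefore a natural technical boundary inherited from the proof of the ETNc in this setting rather than from any step peculiar to our argument.
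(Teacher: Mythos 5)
Your overall strategy coincides with the paper's: both parts of the theorem are deduced from the ETNc for $E(r)_F$, and the hypotheses ($p$ split in $k$, $p\nmid 6$) are invoked only to guarantee that the ETNc is actually a theorem in this range (the paper cites the Main Theorem of Johnson-Leung's \emph{The Local equivariant Tamagawa Number Conjecture for Almost Abelian Extensions} rather than Bley/Huber--Kings/Burns--Flach, but the ultimate input via elliptic units and Rubin's main conjecture is the same). Your treatment of part (1) and of uniqueness in part (2) matches the paper's (Proposition \ref{theorem4.4}, Corollary \ref{corollary4.5}, and injectivity of the regulator on the torsion-free quotient).

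However, for the existence claim in part (2) there is a genuine gap: you assert that the $p$-integral ETNc ``asserts precisely'' that $w_{1-r}(F^{\mathrm{ker}(\chi)})L_{S}'(r,\chi^{-1}){\mid}G{\mid}e_{\chi}$ lies in the image of $\rho^{r}_{F,p}$, and you flag the translation from the determinant identity to this element-level statement as ``the main obstacle'' without resolving it. This translation is the actual content of Section \ref{section5}: following Burns--Greither, the ETNc is converted into the Fitting-ideal identity
$$\rho^{r}_{F, p}(K_{1-2r}(O_{F})\otimes O_p)\cdot\mathrm{Fitt}_{O_p[G]}(K_{-2r}(O_{F, S})\otimes O_p)=\varepsilon_{S_f}(r)^{\#}\cdot L^{*}(E(r)_{F},0)^{\#}\cdot\mathrm{Fitt}_{O_p[G]}(K_{1-2r}(O_{F})_{\mathrm{tors}}\otimes O_p),$$
and the desired element is then extracted by multiplying by ${\mid}G{\mid}e_{\chi}$ and using that $K_{1-2r}(O_F)_{\mathrm{tors}}\otimes O_p$ is a cyclic module (a subgroup of $\mathbb{Q}_p/\mathbb{Z}_p(1-r)$) whose Fitting ideal accounts exactly for the factor $w_{1-r}(F^{\mathrm{ker}(\chi)})$. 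Your proposal never explains where this $w_{1-r}$ normalisation comes from, nor why the Euler factors $\varepsilon_{S_f}(r)$ relating $L'$ to $L'_S$ are integral; without the Fitting-ideal step (or an equivalent device) the passage from the lattice equality of the ETNc to the existence of $\epsilon(\chi,S,p)$ with that precise regulator image does not follow.
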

\begin{remark}
Theorem \ref{Thm3.1} proves that the $p$-primary part of Conjecture \ref{conjecture0} holds for all rational primes $p$ which split in $k$ and such that $p\nmid 6$.
\end{remark}
\section{The Equivariant Tamagawa Number conjecture}\label{section2}
In this section, we recall the statement of the Equivariant Tamagawa Number conjecture (the ETNc for short) for the motive $E(r)_{F}$.\\
For more details on the ETNc, the reader is invited to read the excellent survey By M. Flach in \cite{FlachSurvey}.
\subsection{Natations for the ETNc}\label{NotationsETNc}
The required tools for the statement of the ETNc are as follow
\begin{itemize}
\item For any ring $R$ we let $\mathrm{Det}_{R}$ denote the Knudsen-Mumford determinant functor \cite{KM} over $R$, which is a graded invertible (projective of constant rank 1) $R$-module.
\item We let $Y_r(F):=\prod_{\sigma\in\mathrm{Hom}(F,\mathbb{C})}(2\pi i)^{-r}\mathbb{Z}$.
\item We write $\Xi(E(r)_{F})$ for "Fontaine's fundamental line" \cite{fperrin-riou}. It is the quantity :
 $$\Xi(E(r)_{F})= \mathrm{Det}^{-1}_{E[G]}(K_{1-2r}(O_F)\otimes_{\mathbb{Z}} E)^{\#}\otimes \mathrm{Det}_{E[G]}(Y_r(F)^{+}\otimes_{\mathbb{Z}} E)^{\#},$$
 where the map $x\mapsto x^{\#}$ denotes the $\mathbb{Z}$-linear involution of the group ring $\mathbb{Z}[G]$ which satisfies $g^{\#}=g^{-1}$ for each $g\in G$.
 \item For any Galois stable lattice $T_p$ of the étale cohomology group $M_{p}:=H^{0}_{ét}(\mathrm{Spec}(F)\times_{k}\overline{k}, E_p(r))$ ($M_p$ is also the étale realization of the motive $M=E(r)_{F}$), we write $R\Gamma_{c}(O_{k,S}, T_p)$ for the complex of compact-support cohomology (e.g. \cite{nekovar}, 5.3.1). The complex $R\Gamma_{c}(O_{k,S}, T_p)$ lies in a canonical distinguished triangle
$$R\Gamma_{c}(O_{k,S}, T_p)\rightarrow R\Gamma(O_{k,S}, T_p)\rightarrow \oplus_{v\in S}R\Gamma(k_{v}, T_p).$$
where for each $v\in S$, $k_v$ denotes the completion of $k$ relatively to the place $v$.
\item We write $L_{S}(E(r)_{F},s)$ for the $S$-truncated $\mathbb{C} [G]$-valued $L$-function of the motive $E(r)_{F}$.
 It is explicitly given (with respect to the canonical identification $\mathbb{C} [G]=\prod_{\chi\in\hat{G}}\mathbb{C}$)
as
$$L_{S}(E(r)_{F},s) = (L_{S}(s+r, \chi))_{\chi\in\hat{G}}$$
We write $L_{S}^{*}(E(r)_{F},0)$ for the special value of $L_{S}(E(r)_{F},s)$ at $s=0$, which is given by
$$L_{S}^{*}(E(r)_{F},0):= (L_{S}^{*}(r, \chi))_{\chi\in\hat{G}}\in \mathbb{C}[G]^{\times}$$
where $L_{S}^{*}(r, \chi)$ is the special value of the function $L_{S}(s, \chi)$ at $s=r$ (i.e. the coefficient of the first non-zero term in its Taylor expansion). Let $S_\infty$ denote the set of archimedean places of $k$. We abbreviate $L_{S_\infty}(E(r)_{F},s)$ and $L_{S_\infty}^{*}(E(r)_{F},0)$ as $L(E(r)_{F},s)$ and $L^{*}(E(r)_{F},0)$ respectively.
\item There are two isomorphisms involved in the statement of the ETNc:
\begin{itemize}
\item The first isomorphism:
\begin{equation}\label{isoVarthetaInf}
\vartheta^{r}_{F, \infty}: \Xi(E(r)_{F})\otimes \mathbb{R}\; \xrightarrow{\;\sim\;}\; \mathbb{R}\otimes E[G]^{\#}.\;\;\;\;\;\;\;\;\;\;\;\;\;\;\;\;\;\;
\end{equation}
This is the isomorphism described in \cite{burnsGreither}, §3.2, tensored with $E$. The detailed definition of $\vartheta^{r}_{F, \infty}$ will be given in Subsection \ref{4.2.2}.
\item The second isomorphism is the $E_{p}[G]$-equivariant isomorphism
\begin{equation}\label{isoVarthetaP}
\vartheta^{r}_{F, S_p}: \Xi(E(r)_{F})\otimes E_p\; \xrightarrow{\;\sim\;} \; \mathrm{Det}_{E_p[G]}\mathrm{R\Gamma }_{c}(O_{k,S_p}, M_p)
\end{equation}
This is isomorphism (9) in \cite{burnsGreither}, tensored with $E$. The detailed definition of $\vartheta^{r}_{F, S_p}$ will be given in Subsection \ref{sub5.3}.
\end{itemize}
\end{itemize}
\subsection{Statement of the ETNc for $F/k$}
The statement of the Equivariant Tamagawa Number conjecture for the motive $E(r)_{F}$ is given as follows
\begin{conjecture}[Conjecture \ref{conjecture4.3} (the ETNc)]\label{conjecture4.3}

One has 
\begin{conjecture}[Conjecture \ref{conjecture4.3.1}.1]\label{conjecture4.3.1}{\textit{(Rationality Conjecture)}}
$$(\vartheta^{r}_{F, \infty})^{-1}(L^{*}(E(r)_{F},0)^{-1}).E[G] \supseteq \Xi(E(r)_{F})$$
\end{conjecture}

\begin{conjecture}[Conjecture \ref{conjecture4.3.4}.2]\label{conjecture4.3.4}{\textit{(the ETNc)}}\\
If the rationality conjecture (\ref{conjecture4.3.1}.1) holds then we have 
$$O_{p}[G]\vartheta^{r}_{F, S_{p}} \circ (\vartheta^{r}_{F, \infty})^{-1} (L^{*}(E(r)_{F},0)^{-1})=\mathrm{Det}_{O_{p}[G]}\mathrm{R}\Gamma_{c}(O_{k,S_p}, T_p)$$
\end{conjecture}
\end{conjecture}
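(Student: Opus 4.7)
The plan is to establish this equality by following the Iwasawa-theoretic approach to the equivariant Tamagawa number conjecture for abelian extensions of imaginary quadratic fields, due to Bley, which in turn rests on Rubin's proof of the Iwasawa Main Conjecture for imaginary quadratic fields. Rubin's theorem is available precisely under the hypothesis that $p$ splits in $k$, which is the arithmetic reason for that assumption in Conjecture \ref{conjecture4.3.4}. Throughout, we use the Rationality Conjecture (\ref{conjecture4.3.1}.1) to guarantee that $(\vartheta^{r}_{F, \infty})^{-1}(L^{*}(E(r)_F, 0)^{-1})$ is a well-defined element of $\Xi(E(r)_F) \otimes E$, so that applying $\vartheta^{r}_{F, S_p}$ makes sense.

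The first step is to lift the claim to Iwasawa theory. Let $k_\infty/k$ be the $\mathbb{Z}_p^2$-extension obtained from the towers above the two primes of $k$ above the split prime $p$, set $F_\infty := F \cdot k_\infty$, and $\mathcal{G} := \mathrm{Gal}(F_\infty/k)$. I would lift the claimed equality of invertible $O_p[G]$-modules to an analogous equality of invertible $O_p[[\mathcal{G}]]$-modules in which the analytic side becomes Katz's two-variable $p$-adic $L$-function and the algebraic side becomes the determinant of Iwasawa cohomology. The interpolation formula for Katz's $L$-function relates its specialization at the character governing the Tate twist $E(r)_F$ back to the complex values $L_S^{*}(r, \chi^{-1})$, which matches the analytic side of the statement.

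The second step is to invoke Rubin's Main Conjecture, which identifies the characteristic ideal of the quotient of semi-local units by elliptic units with Katz's $p$-adic $L$-function. Converting this characteristic-ideal equality into the determinantal formalism of Burns--Flach, and matching the Iwasawa-theoretic comparison isomorphisms with the $E_p$-scalar extensions of $\vartheta^{r}_{F, \infty}$ and $\vartheta^{r}_{F, S_p}$, gives the sought equality at the level of the infinite tower. Here the hypothesis $p \nmid 6$ enters: it ensures that the roots of unity in the abelian extensions of $k$ that intervene do not produce anomalous torsion factors which would otherwise spoil the equality of invertible modules.

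The third and principal step is descent from $F_\infty$ to $F$. One applies a Hochschild--Serre / Bockstein argument to compare $\mathrm{R}\Gamma_{c}(O_{k,S_p}, T_p)$ with the Iwasawa cohomology complex $\mathrm{R}\Gamma_{\mathrm{Iw}}(F_\infty/F, T_p)$, and proves a control theorem showing that the specialization map on determinants introduces no extraneous ideal. This descent is the main obstacle: naive specialization typically produces spurious Euler-type factors at the primes above $p$, and the combined hypotheses that $p$ splits in $k$ and $p \nmid 6$ are exactly what is needed to force these error terms to be trivial and recover the integral statement of Conjecture \ref{conjecture4.3.4} at finite level.
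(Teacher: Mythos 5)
You have been asked to prove something that the paper itself never proves: Conjecture \ref{conjecture4.3} is stated in the paper purely as a conjecture, and the only use made of it is the citation, in the proof of Theorem \ref{Thm3.1}, of the Main Theorem of Johnson-Leung (\cite{Leung}) which establishes the ETNc for abelian extensions of an imaginary quadratic field precisely when $p$ splits in $k$ and $p\nmid 6$. So there is no in-paper proof to compare against; the honest assessment is whether your outline would constitute a proof of the statement as written, and it does not.

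Two concrete problems. First, the conjecture as stated carries no hypothesis on $p$: it is asserted for the motive $E(r)_F$ and an arbitrary rational prime $p$, with the splitting and $p\nmid 6$ hypotheses appearing only later in Theorem \ref{Thm3.1}. Your argument is built from the start on Rubin's Main Conjecture for $p$ split in $k$ and on a $\mathbb{Z}_p^2$-extension coming from the two primes above $p$, so even if every step were completed it would prove only the special case that the paper already imports from \cite{Leung}, not the conjecture itself. Second, each of your three steps asserts rather than supplies the substantive content: the matching of Katz's interpolation formula at the twist by $r<0$ with the archimedean period isomorphism $\vartheta^{r}_{F,\infty}$, the translation of Rubin's characteristic-ideal statement into an equality of Knudsen--Mumford determinants compatible with $\vartheta^{r}_{F,S_p}$, and above all the descent/control theorem showing that specialization from $O_p[[\mathcal{G}]]$ to $O_p[G]$ introduces no extraneous ideal (the trivial-zero and $\mu$-invariant issues live exactly here) are each the length of a paper in the literature. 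Your stated role for $p\nmid 6$ (``anomalous torsion from roots of unity'') is a plausible guess but is not an argument. As a roadmap of the Bley/Johnson-Leung strategy your sketch is accurate; as a proof of the statement it is neither complete nor aimed at the statement actually made.
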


\section{The ETNc and Statement (1) of Theorem \ref{Thm3.1}}\label{SectionPart1ETNc}
In this subsection, we develop first the explicit definition of the map $\vartheta^{r}_{F, \infty}$ described above in Subsection \ref{NotationsETNc}. We will need this later in our computations to prove part (1) of Theorem \ref{Thm3.1}.
\subsubsection{The isomorphism  $\vartheta^{r}_{F, \infty}$}\label{4.2.2}
\begin{definition}
Let $R$ denote a ring (with unity). For any isomorphism of finitely generated $R$-modules $\phi : V\xrightarrow{\;\sim\;} W$, we let
$$\phi_{triv}: \mathrm{Det}^{-1}_{R}(V)\otimes \mathrm{Det}_{R}(W)\xrightarrow{\;\sim\;} R,$$
obtained by the following composition of isomorphisms
$$\mathrm{Det}^{-1}_{R}(V)\otimes \mathrm{Det}_{R}(W)\stackrel{\mathrm{Det}^{-1}_{R}(\phi)\otimes \mathrm{id}}{\xrightarrow{\;\;\;\;\sim\;\;\;\;}}\mathrm{Det}^{-1}_{R}(W)\otimes \mathrm{Det}_{R}(W)\tilde{\rightarrow} R.$$
\end{definition}
The Beilinson regulator map induces an isomorphism
$$\rho^{r}_{F}: K_{1-2r}(O_F)\otimes \mathbb{R}\;\xrightarrow{\;\sim\;}\; Y_{r}(F)^{+}\otimes \mathbb{R},$$
which defines the isomorphism
$$\vartheta^{r}_{F, \infty} := (({\rho ^{r}_{F}})^{\#})_{triv} : \Xi(\mathbb{Q}(r)_{F})\otimes \mathbb{R}\; \xrightarrow{\;\sim\;}\; \mathbb{R}[G]^{\#}.$$
For the motive $E(r)_F$, extension of scalars gives
$$\vartheta^{r}_{F, \infty}: \Xi(E(r)_{F})\otimes \mathbb{R}\; \xrightarrow{\;\sim\;}\; \mathbb{R}\otimes E[G]^{\#}.$$
\begin{remark}[Remark \ref{dualisation remark}]\label{dualisation remark}
As done above, one can also define the isomorphism
$$\phi_{\widetilde{triv}}: \mathrm{Det}_{R}(V)\otimes \mathrm{Det}^{-1}_{R}(W)\xrightarrow{\;\sim\;} R,$$
and hence an isomorphism 
$$\tilde{\vartheta}^{r}_{F, \infty} := ({\rho ^{r}_{F}})_{\widetilde{triv}} : \Xi(E(r)_{F})^{\vee}\otimes \mathbb{R}\; \xrightarrow{\;\sim\;}\; \mathbb{R}\otimes E[G].$$
Since $\mathrm{Det}_{E[G]}A^{\vee}\cong \mathrm{Det}_{E[G]}^{-1}A^{\#}$ for any perfect $E[G]$-module or complex $A$ (by definition of the functor $\mathrm{Det}$, see \cite{KM})  it becomes clear that $\vartheta^{r}_{F, \infty}$ is exactly the inverse of the dual of $\tilde{\vartheta}^{r}_{F, \infty}.$\\
Hence, if $A\subseteq\Xi(E(r)_{F})\otimes \mathbb{R}$, then 
$$(\vartheta^{r}_{F, \infty}(A))^{\vee}=\tilde{\vartheta}^{r}_{F, \infty}(A^{\vee})$$
\end{remark}
\begin{proposition}\label{theorem4.4}
Suppose that the rational part of the ETNc (conjecture \ref{conjecture4.3.1}.1) holds for $E(r)_{F}$. Then
for each character $\chi\in\hat{G}$, there exists an element $\tilde{\epsilon}_{\chi}(F)\in e_{\chi}(K_{1-2r}(O_F)\otimes E)$ which verifies
$$\rho_{F}^{r}(\tilde{\epsilon}_{\chi}(F)) = L^{'}(r,\chi^{-1})$$
\end{proposition}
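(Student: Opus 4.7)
The plan is to decompose the rational part of the ETNc character-by-character via the idempotents $e_\chi$, and then to unwind the definition of $\vartheta^{r}_{F,\infty}$ from Subsection \ref{4.2.2} so as to translate the resulting statement into the desired formula for $\rho^{r}_{F}$.

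First I would observe that, since $k$ is imaginary quadratic, $F$ is totally imaginary with $r_2=[F:k]$, so Borel's theorem together with Lemma \ref{lemma3.1} shows that both $K_{1-2r}(O_F)\otimes_{\mathbb{Z}}\mathbb{R}$ and $Y_r(F)^{+}\otimes_{\mathbb{Z}}\mathbb{R}$ are free of rank one as $\mathbb{R}[G]$-modules, with the Beilinson regulator inducing an $\mathbb{R}[G]$-linear isomorphism between them. Since $E$ contains all character values of $G$, the decomposition $E[G]\cong\prod_{\chi}E\,e_\chi$ shows that $e_\chi(K_{1-2r}(O_F)\otimes E)$ and $e_\chi(Y_r(F)^{+}\otimes E)$ are one-dimensional $E$-vector spaces for every $\chi\in\hat{G}$, and that $\Xi(E(r)_{F})$ is an invertible $E[G]$-module.

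Next I would fix an $E$-basis $u_\chi$ of $e_\chi(K_{1-2r}(O_F)\otimes E)$ and, using Lemma \ref{lemma3.1}, the $E$-basis $v_\chi$ of $e_\chi(Y_r(F)^{+}\otimes E)$ characterized by $\iota(v_\chi)=e_\chi$, and write $\rho^{r}_{F}(u_\chi)=\alpha_\chi v_\chi$ for a unique $\alpha_\chi\in\mathbb{R}^{\times}$. Unwinding the construction of $\vartheta^{r}_{F,\infty}$ in Subsection \ref{4.2.2} and tracking the involution $\#$ (which swaps $e_\chi$ and $e_{\chi^{-1}}$), the generator $(u_\chi^{-1})^{\#}\otimes v_\chi^{\#}$ of the $\chi^{-1}$-component of $\Xi(E(r)_{F})\otimes\mathbb{R}$ is sent by $\vartheta^{r}_{F,\infty}$ to $\alpha_\chi^{-1}e_{\chi^{-1}}\in\mathbb{R}\otimes E[G]^{\#}$.

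I would then invoke the rationality conjecture \ref{conjecture4.3.1}. The $\chi^{-1}$-component of $L^{*}(E(r)_{F},0)^{-1}$ is $L^{*}(r,\chi^{-1})^{-1}$, which by the Borel rank computation equals $L'(r,\chi^{-1})^{-1}$ since $L(s,\chi^{-1})$ has a simple zero at $s=r$. The inclusion $(\vartheta^{r}_{F,\infty})^{-1}(L^{*}(E(r)_{F},0)^{-1})\cdot E[G]\supseteq\Xi(E(r)_{F})$ then forces $\alpha_\chi/L'(r,\chi^{-1})\in E^{\times}$, and setting
$$\tilde{\epsilon}_\chi(F)\,:=\,\frac{L'(r,\chi^{-1})}{\alpha_\chi}\,u_\chi\;\in\;e_\chi(K_{1-2r}(O_F)\otimes E)$$
produces the required element, whose image under $\rho^{r}_{F}$ equals $L'(r,\chi^{-1})v_\chi$, identified with $L'(r,\chi^{-1})$ via $\iota(v_\chi)=e_\chi$. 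The main obstacle is the careful bookkeeping around $\#$ and the resulting $\chi\leftrightarrow\chi^{-1}$ swap between $K$-theoretic components and components of the $L$-function; everything else is a formal consequence of the semisimplicity of $E[G]$ and the multiplicativity of the Knudsen-Mumford determinant over its primitive idempotents.
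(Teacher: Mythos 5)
Your proposal is correct and follows essentially the same route as the paper: both arguments unwind $\vartheta^{r}_{F,\infty}$ via the identification $Y_r(F)^{+}\otimes E\cong E[G]$ from Lemma \ref{lemma3.1}, decompose the rationality statement over the idempotents $e_\chi$ using that each $e_\chi(K_{1-2r}(O_F)\otimes E)$ is one-dimensional, and read off the inclusion $L'(r,\chi^{-1})E\subseteq\rho^{r}_{F}(e_\chi(K_{1-2r}(O_F)\otimes E))$. Your explicit bookkeeping with the bases $u_\chi$, $v_\chi$ and the scalar $\alpha_\chi$ is just a more concrete rendering of the paper's manipulation of $(\rho^{r}_{F})_{triv}$ on determinant lines.
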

\begin{proof}
Conjecture \ref{conjecture4.3.1}.1  reads 
$$(\vartheta^{r}_{F, \infty})^{-1}(L^{*}(E(r)_{F},0)^{-1}).E[G] \supseteq \Xi(E(r)_{F})$$
where $$\Xi(E(r)_{F})=\mathrm{Det}^{-1}_{E[G]}(K_{1-2r}(F)\otimes E)^{\#}\otimes \mathrm{Det}_{E[G]}(Y_{r}(F)^{+}\otimes E)^{\#}.$$

\noindent By  § \ref{4.2.2}, this is equivalent to
$$ L^{*}(E(r)_{F},0)^{-1}.E[G]\supseteq ((\rho^{r}_{F})^{\#})_{triv}(\mathrm{Det}^{-1}_{E[G]}(K_{1-2r}(F)\otimes E)^{\#}\otimes\mathrm{Det}_{E[G]}(Y_{r}(F)^{+}\otimes E)^{\#})$$
Since $k$ is imaginary quadratic  we have $L^{*}(E(r)_{F},0)=L^{'}(E(r)_{F},0)$ (e.g. \cite{Den90}, $(3.4)$) and we canonically identify
$$Y_{r}(F)^{+}\otimes E\cong E[G].$$
This identification sends  for each $\sigma\in G$, seen as an embedding $\sigma:F\rightarrow \mathbb{C}$, the element $(0,..,(2\pi i)^{-r}a_{\sigma},..,(2\pi i)^{-r}a_{\tau\sigma},..,0)$ (with $a_{\sigma}=\pm a_{\tau\sigma})$ to $a_{\sigma}\sigma^{-1}$.
Hence
$$\mathrm{Det}_{E[G]}(Y_{r}(F)^{+}\otimes E)^{\#}=E[G]$$
Therefore, Conjecture \ref{conjecture4.3.1}.1 is equivalent to
$$ (L^{*}(E(r)_{F},0)^{-1})^{\#}E[G]\supseteq (\rho^{r}_{F})_{triv}(\mathrm{Det}^{-1}_{E[G]}(K_{1-2r}(F)\otimes E)\otimes E[G])$$
Let $\chi\in\hat{G}$ and $e_{\chi}$ the corresponding idempotent. Since $E$ contains all values of such characters of $G$, we get $e_{\chi}E[G] = e_{\chi}E$. In the following we use the identification $e_{\chi}E\cong E$. \\
By definition (see Subsection \ref{NotationsETNc} above)  $L^{'}(E(r)_{F},0)=\Sigma_{\chi\in\mathrm{Hom}(G,\mathbb{C}}L^{'}(r,\chi)e_{\chi}$. Conjecture \ref{conjecture4.3.1}.1 is then equivalent to the following being true for all characters of $G$
$$ L^{'}(r, \chi^{-1})^{-1}E \supseteq (\rho^{r}_{F})_{triv}(\mathrm{Det}^{-1}_{E}(e_{\chi}(K_{1-2r}(F)\otimes E))\otimes E)$$
Since $k$ is imaginary quadratic, by Beilinson regulator
$$(K_{1-2r}(O_F)\otimes E)\otimes \mathbb{R}\simeq E[G]\otimes \mathbb{R}$$
The $E$-vector space $e_{\chi}(K_{1-2r}(O_F)\otimes E)$ is then necessarily of dimension 1 and  the restriction of $(\rho^{r}_{F})_{triv}$ to the $\chi$-eigenspaces is defined as follows
\begin{align*}
(\rho^{r}_{F})_{triv}&: \mathrm{Det}^{-1}_{E}(e_{\chi}(K_{1-2r}(O_F)\otimes E))\otimes E \rightarrow E\\
&:\mathrm{Hom}(e_{\chi}(K_{1-2r}(O_F)\otimes E), E)\otimes E\stackrel{(({\rho^{r}_{F}})^{\vee})^{-1}\otimes Id_{E}}{\rightarrow} E
\end{align*}
It follows that Conjecture \ref{conjecture4.3.1}.1 is equivalent to the following being true for all characters of $G$
$$ L^{'}(r, \chi^{-1})E \subseteq \rho^{r}_{F}(e_{\chi}(K_{1-2r}(O_F)\otimes E))$$
which means there exists for each character $\chi$ of $G$ an element $\tilde{\epsilon}_{\chi}(F)\in e_{\chi}(K_{1-2r}(O_F)\otimes E)$ which verifies
$$\rho^{r}_{F}(\tilde{\epsilon}_{\chi}(F)) = L^{'}(r,\chi^{-1}).$$

\hspace*{\fill}$\Box$
\end{proof}
\begin{corollary}\label{corollary4.5}
If the rational part of the ETNc (i.e. Conjecture \ref{conjecture4.3.1}.1) holds, then part (1) of Theorem \ref{Thm3.1} is true for all rational primes $p$.
\end{corollary}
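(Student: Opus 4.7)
The plan is to extract the statement directly from Proposition~\ref{theorem4.4} after an elementary comparison between $L(s,\chi^{-1})$ and $L_S(s,\chi^{-1})$ at $s=r$. Assume Conjecture~\ref{conjecture4.3.1}.1 holds. Then Proposition~\ref{theorem4.4} supplies, for every character $\chi\in\hat{G}$, an element $\tilde\epsilon_\chi(F)\in e_\chi(K_{1-2r}(O_F)\otimes E)$ satisfying $\rho^r_F(\tilde\epsilon_\chi(F))=L'(r,\chi^{-1})$, so that $L'(r,\chi^{-1})e_\chi\in\rho^r_F(K_{1-2r}(O_F)\otimes E)=\mathcal{T}\otimes E$ via the identification of Lemma~\ref{lemma3.1}.

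To bridge the gap to the $S$-truncated $L$-function, I would observe that, since $S$ contains all finite places ramifying in $F/k$, a direct comparison of Euler products gives
$$L_S(s,\chi^{-1})=L(s,\chi^{-1})\cdot\prod_{v\in S_f\setminus\mathrm{Ram}(F/k)}\bigl(1-\chi^{-1}(\mathrm{Frob}_v)\mathbf{N}v^{-s}\bigr).$$
As already noted in the proof of Proposition~\ref{theorem4.4}, the equality $L^*(E(r)_F,0)=L'(E(r)_F,0)$ over an imaginary quadratic base forces $L(r,\chi^{-1})=0$. Differentiating the displayed identity at $s=r$ and applying the Leibniz rule then yields
$$L'_S(r,\chi^{-1})=L'(r,\chi^{-1})\cdot\prod_{v\in S_f\setminus\mathrm{Ram}(F/k)}\bigl(1-\chi^{-1}(\mathrm{Frob}_v)\mathbf{N}v^{-r}\bigr),$$
in which the right-hand factor lies in $O\subseteq E$ since $r<0$ makes each $\mathbf{N}v^{-r}$ a positive integer.

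Multiplying by the rational integers $w_{1-r}(F^{\mathrm{ker}(\chi)})$ and $|G|$, one sees that $w_{1-r}(F^{\mathrm{ker}(\chi)})L'_S(r,\chi^{-1})|G|e_\chi$ is an $E$-scalar multiple of $L'(r,\chi^{-1})e_\chi=\rho^r_F(\tilde\epsilon_\chi(F))$. Since $\mathcal{T}\otimes E$ is stable under multiplication by $e_\chi\in E[G]$ and under scalar multiplication by $E$, this produces the required element of $\mathcal{T}\otimes E$, proving part~(1) of Theorem~\ref{Thm3.1}. I expect the only---rather mild---obstacle to be the correct bookkeeping of Euler factors at the non-ramified places of $S_f$ together with the implicit use of the vanishing $L(r,\chi^{-1})=0$; the latter is however not a new ingredient, as it is exactly what is already taken for granted in the formulation of Proposition~\ref{theorem4.4}.
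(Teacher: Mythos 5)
Your argument is correct and follows essentially the same route as the paper: invoke Proposition \ref{theorem4.4} to place $L'(r,\chi^{-1})e_\chi$ in $\mathcal{T}\otimes E$, then pass from $L'$ to $L'_S$ via the Euler factors at the finite places of $S$, which lie in $O\subseteq E$, and absorb the integer factors $w_{1-r}(F^{\mathrm{ker}(\chi)})$ and ${\mid}G{\mid}$. The only difference is cosmetic: you spell out the Leibniz-rule justification (using $L(r,\chi^{-1})=0$) for the identity $L'_S(r,\chi^{-1})=L'(r,\chi^{-1})\prod_v(1-\chi^{-1}(\mathrm{Frob}_v)\mathbf{N}v^{-r})$, which the paper simply asserts.
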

\begin{proof}
Recall that $\mathcal{T}$ denotes the complete $\mathbb{Z}$-lattice in $\mathbb{R}[G]$, which is exactly the image of $K_{1-2r}(O_F)$ by the Beilinson regulator.\\
If Conjecture \ref{conjecture4.3.1}.1  holds , then Proposition \ref{theorem4.4}, shows that the element $$w_{1-r}(F^{\mathrm{ker}(\chi)})L^{'}(r,\chi^{-1})\chi^{-1}(\sigma){\mid}G{\mid} e_{\chi},$$ belongs to $\mathcal{T}\otimes E$.\\
The proof ends once we observe that
$L^{'}_{S}(r,\chi)=L^{'}(r,\chi)\prod_{v\in S\backslash  S_\infty}(1-\chi(\mathrm{Frob}_v)\mathbf{N}v^{-r})$
and $\prod_{v\in S\backslash  S_\infty}(1-\chi(\mathrm{Frob}_v)\mathbf{N}v^{-r})\in O$.\hspace*{\fill}$\Box$
\end{proof}

\section{The ETNc and Statement (2) of Theorem \ref{Thm3.1}}\label{section5}

The main proposition of this section is the following
\begin{proposition}\label{theorem4.8}
Suppose that the ETNc (conjecture \ref{conjecture4.3.4}.2) holds for $E(r)_{F}$. Then 
\small
\begin{align*}
\rho^{r}_{F, p}(K_{1-2r}(O_{F})\otimes O_p).\mathrm{Fitt}_{O_p[G]}(&K_{-2r}(O_{F, S})\otimes O_p)=\\ 
&\varepsilon_{S_f}(r)^{\#}.L^{*}(E(r)_{F},0)^{\#}.\mathrm{Fitt}_{O_p[G]}(K_{1-2r}(O_{F})_{\mathrm{tors}}\otimes O_p)
\end{align*}
\normalsize{} 
\end{proposition}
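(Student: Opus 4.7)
My plan is to deduce the asserted Fitting ideal identity by translating the ETNc equality of Conjecture \ref{conjecture4.3.4}.2 into explicit $K$-theoretic terms. The right-hand side of the ETNc, the Knudsen-Mumford determinant $\mathrm{Det}_{O_p[G]}R\Gamma_c(O_{k,S_p}, T_p)$, will be computed as an alternating product of Fitting ideals of the $p$-completions of $K_{1-2r}(O_F)$ and $K_{-2r}(O_{F,S})$, while the left-hand side, after applying $\vartheta^r_{F,S_p} \circ (\vartheta^r_{F,\infty})^{-1}$ to $L^*(E(r)_F,0)^{-1}$, encodes the regulator image $\rho^r_{F,p}(K_{1-2r}(O_F) \otimes O_p)$ together with the $L$-value factor.

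To compute $\mathrm{Det}_{O_p[G]}R\Gamma_c(O_{k,S_p}, T_p)$ explicitly, I would first invoke Artin-Verdier duality to relate the compact-support cohomology to the étale cohomology of $\mathrm{Spec}(O_{F,S_p})$ with coefficients in a Tate twist of $O_p$. Since $p \nmid 6$, the Quillen-Lichtenbaum conjecture, established by Voevodsky-Rost, provides canonical isomorphisms between these étale cohomology groups (after $p$-completion) and the $K$-theory groups $K_{1-2r}(O_{F,S_p}) \otimes O_p$ and $K_{-2r}(O_{F,S_p}) \otimes O_p$ in the appropriate degrees. The free part of $K_{1-2r}(O_F) \otimes O_p$ is precisely the source of the regulator contribution, while its torsion part and $K_{-2r}(O_{F,S_p}) \otimes O_p$ contribute as Fitting ideals in the determinant.

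To pass from $S_p$ to the full set $S$, I would use the distinguished triangle recalled in Subsection \ref{NotationsETNc} to account for the missing contribution as $\bigoplus_{v \in S_f \setminus S_p} R\Gamma(k_v, T_p)$; the Knudsen-Mumford determinants of these local complexes are generated precisely by $(1 - \mathbf{N}v^{-r} \mathrm{Frob}_v e_{I_v})$, which collectively produce the factor $\prod_{v \in S_f \setminus S_p}(1-\mathbf{N}v^{-r}\mathrm{Frob}_v e_{I_v})^\#$. Combining this with the fact that $L^*(E(r)_F,0)$ and $L^*_S(E(r)_F,0)$ differ by the Euler factors at $S_f$, one gets exactly the multiplier $\varepsilon_{S_f}(r)^\#$ on the right-hand side and the replacement of $K_{-2r}(O_{F,S_p})$ by $K_{-2r}(O_{F,S})$ on the left. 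The description of $\vartheta^r_{F,\infty}$ via the Beilinson regulator (Subsection \ref{4.2.2}) then directly yields the image $\rho^r_{F,p}(K_{1-2r}(O_F) \otimes O_p)$ as the regulator term.

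The main technical obstacle I anticipate lies in the careful bookkeeping of the two involutions $(-)^\vee$ and $(-)^\#$ when translating the Knudsen-Mumford determinant of a complex into an alternating product of Fitting ideals of its cohomology, and in verifying that the relevant $p$-completed $K$-theory modules are perfect over $O_p[G]$ so that the determinant formalism collapses to Fitting ideals. The hypotheses that $p$ splits in $k$ (which makes the local structure at places above $p$ tractable and compatible with the imaginary quadratic setting) and $p \nmid 6$ (which makes Quillen-Lichtenbaum available and eliminates small-torsion pathologies in the spectral sequences) are used precisely at this point.
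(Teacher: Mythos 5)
Your overall skeleton---dualising the compact-support cohomology, identifying its cohomology with \'etale cohomology and then with $K$-groups via Chern class maps, and tracking the Euler factors $\varepsilon_{S_f}(r)$ coming from the enlargement of $S_p$ to $S$---does match the paper's strategy. But there is a genuine gap at the decisive step: you assert that $\mathrm{Det}_{O_p[G]}$ of the resulting complex collapses to an alternating product of Fitting ideals of its cohomology once perfectness of the $K$-theory modules is checked. This is false as stated: the individual cohomology modules ($K$-groups tensored with $O_p$) are in general \emph{not} of finite projective dimension over $O_p[G]$, so the determinant cannot be applied to them term by term, and the determinant of a perfect complex is not the alternating product of Fitting ideals of its cohomology groups. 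The paper's proof works around exactly this obstruction: it first splits off the infinite (regulator-lattice) part of $H^{0}$ and the $Y_r(F)^{+}$ part of $H^{1}$ by mapping in an auxiliary two-term complex $Z^{\bullet}$ with zero differential --- this is where the invertible multiplier $L^{*}(E(r)_{F},0)^{\#}/(\delta{\mid}G{\mid})$ and the regulator image $\rho^{r}_{F,p}(K_{1-2r}(O_F)\otimes O_p)$ enter --- so that the mapping cone has \emph{finite} cohomology concentrated in degrees $0$ and $1$; it then represents that cone, following Burns--Greither, by a two-term complex $Q\to Q'$ of finite modules of projective dimension at most one, for which $\mathrm{Det}_{O_p[G]}=\mathrm{Fitt}_{O_p[G]}(Q)^{-1}\mathrm{Fitt}_{O_p[G]}(Q')$ genuinely holds, and finally applies Lemma 5 of Burns--Greither to the resulting $2$-extension. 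Your proposal contains no mechanism for separating the full-rank lattice from the finite pieces, and without it the identity cannot even be formulated at the level of Fitting ideals.

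A second, smaller omission: the Burns--Greither lemma produces $\mathrm{Fitt}_{O_p[G]}\bigl((K_{1-2r}(O_F)_{\mathrm{tors}}\otimes O_p)^{*}\bigr)$ with a Pontryagin dual, and one must still argue, via the cyclicity of $K_{1-2r}(O_F)_{\mathrm{tors}}\otimes O_p$ as a subgroup of $\mathbb{Q}_p/\mathbb{Z}_p(1-r)$, that this equals the Fitting ideal of the undualised module; you flag ``bookkeeping of involutions'' as a risk but do not identify this point. Finally, the hypotheses that $p$ splits in $k$ and $p\nmid 6$ are not used in this proposition at all --- the Chern class isomorphisms are available for every $p$ here because $k$ is imaginary quadratic (so $F$ is totally imaginary), and the splitting and $p\nmid 6$ conditions enter only later, when the ETNc is invoked as a theorem of Johnson-Leung. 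Attributing them to the Quillen--Lichtenbaum step misplaces where the arithmetic input actually occurs.
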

To prove Proposition \ref{theorem4.8} we outline first some results from literature: these are preliminaries in Subsection \ref{preliminaries}. The proof of Proposition \ref{theorem4.8} is later given in Subsection \ref{proofPart2}.

\subsection{Preliminaries}\label{preliminaries}
\subsubsection{From compact-support cohomology to étale cohomology}\label{sub5.2}
In the sequel, we fix $T_p =O_{p}[G](r)$.
\begin{lemma}\label{lemma4.6}
Let $p$ be a rational prime and $S$ a finite set of places of $k$ containing the infinite places, the $p$-places and the places which ramify in $F/k$.\\
Suppose that either $p$ is odd or $k$ is totally imaginary.
There is a distinguished triangle 
\begin{equation}\label{1}
R\Gamma_{c}(O_{k,S}, T_{p})\rightarrow R\Gamma(O_{k,S}, T_{p}^{\vee}(1))^{\vee}[-3]\rightarrow (\prod_{\sigma:k\rightarrow \mathbb{C}}T_{p})^{+}[0]
\end{equation}
\end{lemma}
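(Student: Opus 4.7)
The plan is to build the triangle from two standard ingredients: Artin--Verdier duality for the $p$-adic coefficient system $T_p = O_p[G](r)$, and the octahedral axiom applied to compare two natural flavours of compact-support cohomology.

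First, I would invoke the ``modified'' form of Artin--Verdier duality, which states that
$$\widehat{R\Gamma}_c(O_{k,S}, T_p) \;\cong\; R\Gamma(O_{k,S}, T_p^\vee(1))^\vee[-3],$$
where $\widehat{R\Gamma}_c$ denotes the variant of compact-support cohomology in which the ordinary local cohomology $R\Gamma(k_v, T_p)$ at each archimedean $v$ is replaced by Tate cohomology $\widehat{R\Gamma}(k_v, T_p)$. This identification is a standard consequence of global duality (compare Nekov\'a\v{r}'s monograph referenced in the paper, or Flach's survey), and it already pins down the middle term of the desired triangle.

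Next, I would compare $R\Gamma_c$ with $\widehat{R\Gamma}_c$ by applying the octahedral axiom to the composition
$$R\Gamma(O_{k,S}, T_p) \to \bigoplus_{v \in S} R\Gamma(k_v, T_p) \to \bigoplus_{v \in S_f} R\Gamma(k_v, T_p) \oplus \bigoplus_{v \in S_\infty} \widehat{R\Gamma}(k_v, T_p),$$
whose two fibres are exactly $R\Gamma_c$ and $\widehat{R\Gamma}_c$. The resulting distinguished triangle reads
$$R\Gamma_c(O_{k,S}, T_p) \to \widehat{R\Gamma}_c(O_{k,S}, T_p) \to \bigoplus_{v \in S_\infty} C_v,$$
where $C_v$ is the fibre of the natural comparison map $R\Gamma(k_v, T_p) \to \widehat{R\Gamma}(k_v, T_p)$.

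The hypothesis is precisely what is needed to evaluate $C_v$: when $k$ is totally imaginary every archimedean $v$ is complex and $\widehat{R\Gamma}(k_v, T_p) = 0$ trivially; when $p$ is odd the Tate cohomology at a real place is annihilated by $|G_v| = 2$ and again vanishes. In either case $C_v \cong R\Gamma(k_v, T_p) = T_p^{G_v}$ concentrated in degree $0$, and grouping the embeddings $\sigma:k\to\mathbb{C}$ into complex-conjugate pairs indexed by archimedean places identifies $\bigoplus_{v \in S_\infty} T_p^{G_v}$ with $(\prod_{\sigma:k\to\mathbb{C}} T_p)^+$. Substituting the Artin--Verdier isomorphism for the middle term then yields the triangle stated. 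The main technical point is bookkeeping: one must confirm that the archimedean term lands in degree $0$ (rather than off by one) and that the arrows point in the stated direction, which requires careful tracking of the shift $[-3]$ in the duality isomorphism together with the orientation conventions for the octahedron.
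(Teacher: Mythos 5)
Your argument is correct and is essentially the same as the paper's (whose in-text proof is a one-line citation of Burns--Greither \S 3.4, but whose fuller derivation proceeds exactly as you do): Poitou--Tate/Artin--Verdier duality identifies the Tate-modified compactly supported complex --- equivalently, under the stated hypothesis, the fibre of $R\Gamma(O_{k,S},T_p)\to\oplus_{v\in S_f}R\Gamma(k_v,T_p)$ --- with $R\Gamma(O_{k,S},T_p^{\vee}(1))^{\vee}[-3]$, and an octahedron applied to $R\Gamma(O_{k,S},T_p)\to\oplus_{v\in S}R\Gamma(k_v,T_p)\to\oplus_{v\in S_f}R\Gamma(k_v,T_p)$ isolates the archimedean local terms, which are concentrated in degree $0$ and sum to $(\prod_{\sigma:k\to\mathbb{C}}T_p)^{+}$. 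The only point you leave implicit that the paper makes explicit is the identification of the Pontryagin dual $R\Gamma(O_{k,S},T_p^{*}(1))^{*}$ with the $O_p$-linear dual $R\Gamma(O_{k,S},T_p^{\vee}(1))^{\vee}$ (Burns--Flach, Lemma 16(b)), and the degree bookkeeping you defer does check out.
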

\begin{proof}
The distinguished triangle (\ref{1}) is the same as the one given in (\cite{burnsGreither}, §3.4). 
\hspace*{\fill}$\Box$
\end{proof}

\subsubsection{The isomorphism $\vartheta^{r}_{F, S_{p}}$}\label{sub5.3}
In this subsection, we recall the exact definition of the isomorphism $\vartheta^{r}_{F, S_p}$ (this definition is also given e.g. in \cite{burnsFlach1}, (1.17)).
\subsubsection{The $f$-cohomology}\label{subsub5.3.1}
Let $M$ be an object of the category $\mathcal{M}\mathcal{M}_F(E)$ of mixed motives over $F$ with coefficients in $E$. For such motive $M$, we let:
\begin{itemize}
\item $M^{\vee}(1)$ denote "Tate twisted dual" of $M$.
\item $H^{i}_{\mathcal{M}}(M)$ (resp. $H^{i}_{\mathcal{M}}(M^{\vee}(1))$) the motivic cohomology of $M$ (resp. of $M^{\vee}(1)$).
\item $M_B$ the Betti realization of $M$, which carries an action of complex conjugation.
\item For each rational prime number $p$, we write $M_p$ for the étale realization of $M$  $$M_{p}:=H^{0}_{ét}(\mathrm{Spec}(F)\times_{k}\overline{k}, E_p(r)).$$
\end{itemize}
Let $p$ be a rational prime number. Following Fontaine \cite{Fontaine}, for every finite place $v$ of $k$ we denote by $R\Gamma_{f}(k_{v}, M_{p})$ the complex of local unramified cohomology of $M_p$.\\
For any finite set $S$ of places of $k$ containing the set of $p$-places, and the set $S_\infty$ of infinite places, we write $R\Gamma_{f}(O_{k,S} ,M_{p})$ for the complex of global unramified cohomology of $M_p$.\\
For a detailed introduction to local and global unramified cohomology we advise the reader to consult the excellent survey by T. Nguyen Quang Do in \cite{Thong1}.\\
\begin{remark}
For later computations, we recall that, if the place $v$ is archimedean, then the complex $R\Gamma_{f}(k_{v}, M_{p})$ of local $f$-cohomology is given by
$$R\Gamma_{f}(k_{v}, M_{p}):= R\Gamma(k_{v}, M_{p})\;\;\;\;v\mid\infty$$
\end{remark}
\subsubsection{Definition of the isomorphism $\vartheta^{r}_{F, S_p}$}
Recall that $S_p$ denotes the set of places of $k$ comprised of the $p$-places and the infinite places. We let $S_{p, f}$ denote the subset of finite places of $S_p$ (i.e. of places above $p$).
\begin{proposition}\label{proposition4.2}
There is a distinguished triangle
$$R\Gamma_{c}(O_{k,S_p}, M_p)\rightarrow R\Gamma_{f}(O_{k,S_p}, M_p)\rightarrow \oplus_{v\in S_{p, f}}R\Gamma_{f}(k_{v}, M_p)\oplus \oplus_{v\in S_\infty}R\Gamma(k_{v}, M_p)$$
\end{proposition}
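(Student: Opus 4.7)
The plan is to deduce the asserted triangle by combining the two defining triangles of $R\Gamma_{c}(O_{k,S_p}, M_p)$ and $R\Gamma_{f}(O_{k,S_p}, M_p)$ via the octahedral axiom. First I would recall the two basic distinguished triangles that I have at my disposal. The complex of compact-support cohomology sits, by its very definition, in the triangle
$$R\Gamma_{c}(O_{k,S_p}, M_p)\rightarrow R\Gamma(O_{k,S_p}, M_p)\rightarrow \bigoplus_{v\in S_p}R\Gamma(k_{v}, M_p),$$
and the complex of global unramified cohomology is, following Fontaine (see also \cite{Thong1}), defined so as to sit in the triangle
$$R\Gamma_{f}(O_{k,S_p}, M_p)\rightarrow R\Gamma(O_{k,S_p}, M_p)\rightarrow \bigoplus_{v\in S_{p,f}}R\Gamma(k_{v}, M_p)/R\Gamma_{f}(k_{v}, M_p),$$
where only finite places contribute because, by the convention recalled in the preceding remark, at archimedean places $R\Gamma_{f}(k_v, M_p)=R\Gamma(k_v, M_p)$ so that the corresponding quotient vanishes.

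Next I would observe that the vertical map
$$\bigoplus_{v\in S_p}R\Gamma(k_v, M_p)\longrightarrow \bigoplus_{v\in S_{p,f}}R\Gamma(k_v, M_p)/R\Gamma_{f}(k_v, M_p)$$
obtained by projecting away from archimedean places and then quotienting fits in a commutative diagram of distinguished triangles
$$\xymatrix{R\Gamma_c(O_{k,S_p},M_p)\ar[r]\ar@{.>}[d] & R\Gamma(O_{k,S_p},M_p)\ar@{=}[d]\ar[r] & \bigoplus_{v\in S_p}R\Gamma(k_v,M_p)\ar[d]\\
R\Gamma_f(O_{k,S_p},M_p)\ar[r] & R\Gamma(O_{k,S_p},M_p)\ar[r] & \bigoplus_{v\in S_{p,f}}R\Gamma(k_v,M_p)/R\Gamma_f(k_v,M_p)}$$
and by the standard completion property of triangulated categories the dotted arrow exists and makes the whole diagram commute. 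Applying the $3\times 3$-lemma (equivalently, the octahedral axiom) to this morphism of triangles identifies the cone of the induced map $R\Gamma_c\to R\Gamma_f$ with the fiber of the right-hand vertical map.

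Finally I would compute that fiber termwise: the map is the identity on each $R\Gamma(k_v,M_p)$ with $v\in S_\infty$ (which contribute nothing to the target), and on each $R\Gamma(k_v, M_p)$ with $v\in S_{p,f}$ it is precisely the quotient by $R\Gamma_f(k_v, M_p)$. Hence its fiber is
$$\bigoplus_{v\in S_{p,f}}R\Gamma_{f}(k_v, M_p)\oplus \bigoplus_{v\in S_\infty}R\Gamma(k_v, M_p),$$
which yields the announced triangle. The one subtlety is that this relies on choosing a definition of $R\Gamma_f(O_{k,S_p},M_p)$ compatible with both $R\Gamma_c$ and with Nekovar's/Fontaine's conventions at the archimedean places; once these conventions are fixed as in the remark preceding the proposition, the argument above is purely formal in the triangulated category of $O_p[G]$-complexes, so no new obstacle arises.
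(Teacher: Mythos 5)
Your argument is correct, but it takes a genuinely different route from the paper: the paper's entire proof is a citation ("this is exactly the distinguished triangle (1.11) in Burns--Flach, \emph{Motivic $L$-functions and Galois module structures}"), whereas you reconstruct the derivation from first principles. Your reconstruction is essentially the standard one underlying the cited result: you pair the defining triangle of $R\Gamma_c$ (which the paper does record in Subsection \ref{NotationsETNc}) with the defining triangle of global $f$-cohomology, compare them over the common middle term $R\Gamma(O_{k,S_p},M_p)$, and identify the cone of $R\Gamma_c\to R\Gamma_f$ with the fiber of the map $\bigoplus_{v\in S_p}R\Gamma(k_v,M_p)\to\bigoplus_{v\in S_{p,f}}R\Gamma(k_v,M_p)/R\Gamma_f(k_v,M_p)$, which you compute termwise using the convention $R\Gamma_f(k_v,M_p)=R\Gamma(k_v,M_p)$ at $v\mid\infty$. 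The shift bookkeeping works out (the $3\times3$ lemma with the identity in the middle column gives $\mathrm{Cone}(R\Gamma_c\to R\Gamma_f)\cong\mathrm{Cone}(C\to E)[-1]$, which is the fiber you describe), so the triangle you obtain is the asserted one. What your approach buys is self-containedness and transparency about exactly which conventions for $R\Gamma_f$ are being used; what the paper's approach buys is brevity and the guarantee that the conventions agree with those of Burns--Flach, which matter later when the maps $\vartheta^r_{F,S_p}$ and $\tilde\vartheta^r_{F,S_p}$ are compared via Euler factors. Two minor points you should make explicit if you flesh this out: first, the global $f$-cohomology triangle you "recall" is really the \emph{definition} of $R\Gamma_f(O_{k,S_p},M_p)$ in the Fontaine--Perrin-Riou/Burns--Flach framework, so it should be stated as such rather than as a known property; second, the dotted arrow supplied by the completion axiom of a triangulated category is not unique in general, so one should either note that the canonical choice is the one furnished by the explicit mapping-fiber constructions, or accept that any choice yields the isomorphism class of triangle asserted, which is all the proposition claims.
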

\begin{proof}
This is exactly the distinguished triangle (1.11) in \cite{burnsFlach1}.
\hspace*{\fill}$\Box$
\end{proof}
\begin{proposition}\label{proposition4.3}
Suppose that $p$ is odd or that $k$ is totally imaginary if $p=2$. Let $r<0$. There exists an $E_{p}[G]$-equivariant isomorphism
$$\vartheta^{r}_{F, S_p}: \Xi(E(r)_{F})\otimes E_p\; \xrightarrow{\;\sim\;} \; \mathrm{Det}_{E_p[G]}\mathrm{R\Gamma }_{c}(O_{k,S_p}, ((E(r)_{F}))_p) $$
\end{proposition}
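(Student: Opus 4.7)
The plan is to construct $\vartheta^{r}_{F, S_p}$ by applying the Knudsen--Mumford determinant functor to the distinguished triangle of Proposition \ref{proposition4.2} and then identifying each of the three resulting pieces with the corresponding factor of $\Xi(E(r)_F) \otimes E_p$. Concretely, applying $\mathrm{Det}_{E_p[G]}$ to
$$R\Gamma_{c}(O_{k,S_p}, M_p)\rightarrow R\Gamma_{f}(O_{k,S_p}, M_p)\rightarrow \bigoplus_{v\in S_{p, f}}R\Gamma_{f}(k_{v}, M_p)\oplus \bigoplus_{v\in S_\infty}R\Gamma(k_{v}, M_p)$$
yields an isomorphism that expresses $\mathrm{Det}_{E_p[G]} R\Gamma_c(O_{k,S_p}, M_p)$ as the tensor product of the global $f$-cohomology determinant with the inverse of the local contributions. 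It then suffices to identify each factor.

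First, for the global $f$-cohomology $R\Gamma_f(O_{k,S_p}, M_p)$, I would use the Chern character / Soulé regulator to obtain an $E_p[G]$-equivariant identification $H^1_f(O_{k,S_p}, M_p) \cong K_{1-2r}(O_F) \otimes E_p$ (after tensoring with $E_p$ the Bloch--Kato comparison of motivic and $f$-cohomology holds since $p$ is odd or $k$ is totally imaginary, a hypothesis that ensures the standard Borel--Soulé-type identifications). For $r<0$ one has $H^0_f(O_{k,S_p}, M_p) = 0$, so the determinant of $R\Gamma_f(O_{k,S_p}, M_p)$ reduces, up to a torsion-type factor in $H^2_f$ (which is concentrated in torsion after $\otimes E_p$), to $\mathrm{Det}^{-1}_{E_p[G]}(K_{1-2r}(O_F)\otimes E_p)^{\#}$ after dualizing appropriately (this is where the involution $\#$ enters, via the canonical identification of the $G$-action on $\mathrm{Hom}$'s).

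Second, for the finite local terms $v \in S_{p,f}$, I would invoke Bloch--Kato's fundamental exact sequence for the crystalline/de~Rham realization of $M_p$. Since $M = E(r)_F$ is a Tate twist with $r<0$, the tangent space $t_M = M_{dR}/F^0 M_{dR}$ vanishes, so the contribution of the finite local $f$-cohomology to the determinant is trivial (or contributes only through a rational Euler-factor-like scalar, which I can fold into the isomorphism). Third, for the archimedean terms $v \in S_\infty$, one has $R\Gamma(k_v, M_p) = M_p^{G_{k_v}}[0]$ (under the parity hypothesis on $p$), and the product over all archimedean places of $k$ canonically identifies $\bigoplus_{v \in S_\infty} R\Gamma(k_v, M_p) \cong \bigl(\prod_{\sigma: F \to \mathbb{C}} E_p(r)\bigr)^+ = Y_r(F)^+ \otimes E_p$, contributing the factor $\mathrm{Det}_{E_p[G]}(Y_r(F)^+ \otimes E_p)^\#$ on the inverse side of the formula.

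Combining these three identifications gives exactly
$$\mathrm{Det}_{E_p[G]} R\Gamma_c(O_{k,S_p}, M_p) \;\cong\; \mathrm{Det}^{-1}_{E_p[G]}(K_{1-2r}(O_F)\otimes E_p)^{\#} \otimes \mathrm{Det}_{E_p[G]}(Y_r(F)^+ \otimes E_p)^{\#} = \Xi(E(r)_F) \otimes E_p,$$
which is the desired $\vartheta^{r}_{F, S_p}$ (up to inversion). The main obstacle I expect is keeping careful track of the dualization and the $\#$-involution throughout, and verifying that the Bloch--Kato/crystalline identifications at places $v \mid p$ are compatible $E_p[G]$-equivariantly; the parity/totally-imaginary hypothesis is what guarantees that the archimedean piece behaves as a single shifted module (otherwise real places would introduce a higher $H^i$ contribution), and this assumption is used implicitly through the reference to the framework of \cite{burnsGreither} and \cite{burnsFlach1}.
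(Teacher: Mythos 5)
Your proposal follows essentially the same route as the paper: apply $\mathrm{Det}_{E_p[G]}$ to the distinguished triangle of Proposition \ref{proposition4.2}, trivialize the finite local terms using the vanishing of $R\Gamma_f(k_v,M_p)$ for $r<0$, identify the archimedean contribution with $Y_r(F)^+\otimes E_p$ via the Betti realization, and identify the global $f$-cohomology with the $K$-theory factor via the Chern class map. The only imprecision is that the Chern map identification should read $H^1_f(O_{k,S_p},M_p^{\vee}(1))\cong K_{1-2r}(O_F)\otimes E_p$ (with $H^2_f(O_{k,S_p},M_p)$ its dual), but since you pass to the dual and insert the $\#$-involution at the end this comes out correctly.
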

\begin{proof}
Since we work with the motive $M:=E(r)_{F}$ and $r<0$, the fundamental line is
$$\Xi(M):=\mathrm{Det}_{E[G]}(H^{1}_{\mathcal{M}}(M^{\vee}(1))^{\vee})\otimes \mathrm{Det}^{-1}_{E[G]}(M_{B}^{+})$$
We then have the following isomorphisms
\begin{itemize}
\item $M_{B}^{+}\otimes E_{p} \stackrel{\alpha}{\xrightarrow{\;\sim\;}} (\oplus_{v\in S_{\infty}}H^{0}(k_{v},M_{p}))$
\item The Chern map isomorphism $$H^{1}_{\mathcal{M}}(M^{\vee}(1))\otimes E_{p}\stackrel{ch}{\xrightarrow{\;\sim\;}}H^{1}_{f}(O_{k,S_{p}}, M_{p}^{\vee}(1)).$$
\end{itemize}
The isomorphism $\vartheta^{r}_{F, S_p}$ is induced by these isomorphisms and the distinguished triangle of Proposition \ref{proposition4.2} on the one hand, and by the fact that,
since $r<0$, the complex $R\Gamma_{f}(k_{v}, M_p)$ is acyclic for all finite places on the other hand (e.g. \cite{burnsFlach}, §2, after Lemma 1).
\hspace*{\fill}$\Box$
\end{proof}
It is important to mention that in the definition of $\vartheta^{r}_{F, S_p}$, the module $\mathrm{Det}^{-1}_{E_{p}[G]} (\oplus_{v\in S_{p, f}}R\Gamma_{f}(k_{v}, M_p)=\otimes_{v\in S_{p, f}}\mathrm{Det}^{-1}_{E_{p}[G]} R\Gamma_{f}(k_{v}, M_p)$ maps to $E_{p}[G]$ in the following fashion (e.g. \cite{burnsFlach}, §2):\\
Write $V_{v}\rightarrow V_{v}$ for the complex $R\Gamma_{f}(k_{v}, M_p)$, then $\mathrm{Det}^{-1}_{E_{p}[G]} R\Gamma_{f}(k_{v}, M_p)$ maps to $E_{p}[G]$ via $$\mathrm{Id}_{V_{v}, triv}: \mathrm{det}^{-1}_{E_{p}[G]}V_{v}\otimes \mathrm{det}_{E_{p}[G]}V_{v} \xrightarrow{\;\sim\;} E_{p}[G].$$
and
$$\vartheta^{r}_{F, S_p}:= \mathcal{J}_{S_p}\circ(\mathrm{Det}(ch^{\vee})\otimes \mathrm{Det}^{-1}(\alpha)\otimes \big{(}\otimes_{v\in S_p}(\mathrm{Id}_{V_{v}, triv})^{-1})\big{)}.$$
\subsubsection{The isomorphism $\tilde{\vartheta}^{r}_{F, S_p}$}\label{subsub5.3.3}
There is another way to map  $\Xi(E(r)_{F})\otimes E_p$ to $\mathrm{Det}_{E_p[G]}\mathrm{R\Gamma }_{c}(O_{k,S_p}, ((E(r)_{F}))_p)$ by mapping respective cohomology groups one by one (e.g. \cite{burnsFlach1}, §1.4, the remark after (1.16). This can be achieved as follows :\\
Since $R\Gamma_{f}(k_{v}, M_p)$ is acyclic and the complex $R\Gamma_{f}(O_{k,S_p}, M_p)$ is acyclic outside degree 2 ($r<0$), the exact sequence of Proposition  \ref{proposition4.2} gives
$$\left\{
\begin{array}{lr}

 H^{1}_{c}(O_{k,S_p}, M_p)\simeq \oplus_{v\in S_{\infty}}H^{0}(k_{v}, M_p)\\
 H^{2}_{c}(O_{k,S_p}, M_p)\simeq H^{2}_{f}(O_{k,S_p}, M_p)
\end{array}
\right.$$
The isomorphism $\tilde{\vartheta}^{r}_{F, S_p}$ is induced by these isomorphisms
alongside the map
$$\mathrm{det}_{E_{p}[G]}^{-1}H^{0}_{f}(k_{v}, M_p)\otimes \mathrm{det}_{E_{p}[G]}H^{1}_{f}(k_{v}, M_p) \xrightarrow{\;\sim\;} E_{p}[G]$$
Note that the maps $\vartheta^{r}_{F, S_p}$ and $\tilde{\vartheta}^{r}_{F, S_p}$ only differ by the way $\otimes_{v\in S_{p, f}}\mathrm{Det}^{-1}_{E_{p}[G]} R\Gamma_{f}(k_{v}, M_p)$ maps to $E_{p}[G]$.

\noindent By (\cite{burnsFlach} , (11), (12)) we have
$$\tilde{\vartheta}^{r}_{F, S_p}=\varepsilon_{S_{p, f}}(r).\vartheta^{r}_{F, S_p}$$
and $\varepsilon_{S_{p, f}}(r)\in (E[G])^{\times}.$
Also, observe that $\varepsilon_{S_p}(r)$ is the inverse of the factor used in \cite{burnsFlach}, since we adopted a dual formulation of the ETNc compared to  Loc. cit.

\subsubsection{The isomorphisms $\mathcal{r}$ and $\tilde{\mathcal{r}}$}\label{sub5.4}
Let $S$ be any finite set of places of $k$, containing the set $S_p$ (of $p$-places and infinite places).
We recall the distinguished triangle (e.g. \cite{LeungKings}, Lemma 2.3)
\begin{equation*}
R\Gamma_{c}(O_{k, S}, M_{p})\rightarrow R\Gamma_{c}(O_{k, S_{p}}, M_{p}) \rightarrow
\oplus_{v\in S_{f}\backslash S_{p, f}} R\Gamma_{f}(k_{v}, M_{p})
\end{equation*}
One gets then an isomorphism
\begin{equation}
\tilde{\mathcal{r}} : \mathrm{Det}_{E_{p}[G]}R\Gamma_{c}(O_{k, S}, M_{p})\xrightarrow{\;\sim\;}\mathrm{Det}_{E_{p}[G]}R\Gamma_{c}(O_{k, S_{p}}, M_{p})
\label{delta tilde}
\end{equation}
We can also define an isomorphism
\begin{equation}
\mathcal{r} : \mathrm{Det}_{E_{p}[G]}R\Gamma_{c}(O_{k, S}, M_{p})\xrightarrow{\;\sim\;}\mathrm{Det}_{E_{p}[G]}R\Gamma_{c}(O_{k, S_{p}}, M_{p})
\label{delta}
\end{equation}
 $\mathcal{r}$ differs from $\tilde{\mathcal{r}}$ by the manner we map $\otimes_{v\in S_{f}\backslash S_{p, f}}\mathrm{Det}_{E_{p}[G]}R\Gamma_{f}(k_{v}, M_{p})$ to $E_{p}[G]$.\\
For $\mathcal{r}$ we use the map $\otimes_{v\in S_{f}\backslash S_{p, f}}\mathrm{Id}_{V_v,triv}$ introduced in the proof of Proposition \ref{proposition4.3}. For $\tilde{\mathcal{r}}$ we use the fact that the cohomology of $R\Gamma_{f}(k_{v}, M_{p})$ is acyclic for finite places. Hence:
$$\mathcal{r}=\varepsilon_{S_{f}\backslash S_{p, f}}(r).\tilde{\mathcal{r}},$$
\subsubsection{Additional considerations:}\label{Additional considerations}
\begin{itemize}
\item[1.] Let $p$ be a rational prime and $S$ a finite set of places of $k$ containing  the set $S_p$ (of infinite places and $p$-places) and the set of places which ramify in $F/k$. 
Let 
$$\tilde{\vartheta}^{r}_{F, S}:= \tilde{\mathcal{r}}^{-1}\circ \tilde{\vartheta}^{r}_{F, S_{p}}\;\;\;\;\;\;\;\;\mathrm{and}\;\;\;\;\;\;\;\;\vartheta^{r}_{F, S}:= \mathcal{r}^{-1}\circ \vartheta^{r}_{F, S_{p}}$$
where $\tilde{\mathcal{r}}$ (resp. $\mathcal{r}$) is the isomorphism (\ref{delta tilde}) (resp. (\ref{delta})) of subsection \ref{sub5.4}.

\noindent Recall that (see subsection \ref{subsub5.3.3} or e.g. \cite{burnsFlach} , (11), (12)) 
$$\tilde{\vartheta}^{r}_{F, S_{p}}=\varepsilon_{S_{p, f}}(r)\vartheta^{r}_{F, S_{p}}$$
and that (see subsection \ref{sub5.4}):
$$\mathcal{r}=\varepsilon_{S_{f}\backslash S_{p, f}}(r).\tilde{\mathcal{r}}$$
Hence
$$(\vartheta^{r}_{F, S})^{-1}=\varepsilon_{S_f}(r).(\tilde{\vartheta}^{r}_{F, S})^{-1}.$$
\item[2.] Like we did in the proof of Theorem \ref{theorem4.4}, since $k$ is imaginary quadratic, we make the following canonical identification 
$$Y_{r}(F)^{+}\cong\mathbb{Z}[G]$$
and consider the Beilinson regulator to be a map
$$\rho_{F}^{r}: K_{1-2r}(O_F)\rightarrow \mathbb{R}[G]$$
by composition with the isomorphism $\iota$ of Lemma \ref{lemma3.1}.
\item[3.] Taking in consideration the last point, we showed in the proof of Theorem \ref{theorem4.4} that 
$$e_{\chi}(\rho^{r}_{F}(K_{1-2r}(O_F))\otimes E)=L^{'}(r, \chi^{-1})e_{\chi}E$$
Thus, by the identification $L^{*}(E(r)_{F},0)=\Sigma_{\chi\in\hat{G}}L^{'}(r, \chi)e_{\chi}$ we get
$$\rho^{r}_{F}(K_{1-2r}(O_F))\otimes E=L^{*}(E(r)_{F},0)^{\#}E[G]$$
Since $L^{*}(E(r)_{F},0)\in \mathbb{R}[G]^{\times}$ we also get
$$\big((L^{*}(E(r)_{F},0)^{\#})^{-1}.\rho^{r}_{F}(K_{1-2r}(O_F))\big)\otimes E=E[G]$$
Let
$$\tilde{\mathcal{T}}:=(L^{*}(E(r)_{F},0)^{\#})^{-1}.\rho^{r}_{F}(K_{1-2r}(O_F)$$
Since $L^{*}(E(r)_{F},0)^{\#}$ is invertible, $\tilde{\mathcal{T}}$ is a $\mathbb{Z}$-order (complete lattice) of the vector space  $\mathbb{R}[G]$.\\
By extension of scalars we can form the $O_p$-order  $\tilde{\mathcal{T}}\otimes O_p$ of $E_p[G]$, which is then generated by a basis $(a_{\chi}e_\chi)_{\chi\in\hat{G}}$, where $a_\chi\in E^{\times}$ for all $\chi\in\hat{G}$. We have the following properties of $\tilde{\mathcal{T}}\otimes O_p$:
\begin{proposition}\label{prop5.5}\text{}\\
\begin{itemize}
\item[a.]$\tilde{\mathcal{T}}\otimes O_p$ is a rank-one free $O_p[G]$-module.
\item[b.] There exists a non-zero element $\delta\in O$ such that
$$\delta{\mid}G\mid.\Big(\tilde{\mathcal{T}}\otimes O_p\Big)\subseteq O_p[G]$$
 \end{itemize}
\end{proposition}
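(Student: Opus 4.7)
The plan handles (b) and (a) separately: (b) is an elementary denominator computation, while (a) reduces to a question about the Galois structure of higher $K$-theory.

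For (b): each $a_\chi\in E^\times=\mathrm{Frac}(O)^\times$ has a denominator in $O$, so pick $\delta\in O\setminus\{0\}$ with $\delta a_\chi\in O$ for every $\chi\in\hat{G}$ (for instance, the product of individual denominators). Because $E$ contains every character value of $G$, the identity $|G|e_\chi=\sum_{\sigma\in G}\chi^{-1}(\sigma)\sigma$ shows $|G|e_\chi\in O[G]$, and hence
\[\delta|G|(a_\chi e_\chi)=(\delta a_\chi)(|G|e_\chi)\in O[G]\subseteq O_p[G].\]
Since $(a_\chi e_\chi)_\chi$ is an $O_p$-basis of $\tilde{\mathcal{T}}\otimes O_p$, $O_p$-linearity gives $\delta|G|\cdot(\tilde{\mathcal{T}}\otimes O_p)\subseteq O_p[G]$.

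For (a): the first step is to identify $\tilde{\mathcal{T}}\otimes O_p$ with a $K$-theoretic lattice. Since $G$ is abelian, $\mathbb{R}[G]$ is commutative, and $L^*(E(r)_F,0)^\#\in\mathbb{R}[G]^\times$ (the first derivative $L'(r,\chi^{-1})$ is nonzero at $r<0$ over an imaginary-quadratic $k$ for every $\chi$). Multiplication by $(L^*(E(r)_F,0)^\#)^{-1}$ is thus a $G$-equivariant $\mathbb{R}$-linear automorphism of $\mathbb{R}[G]$; composing with the Beilinson regulator $\rho^r_F$ (whose kernel is $K_{1-2r}(O_F)_{\mathrm{tors}}$) produces a $\mathbb{Z}[G]$-module isomorphism $K_{1-2r}(O_F)_{/\mathrm{tors}}\xrightarrow{\sim}\tilde{\mathcal{T}}$. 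Tensoring with $O_p$ reduces (a) to the claim that $K_{1-2r}(O_F)_{/\mathrm{tors}}\otimes O_p$ is free of rank one over $O_p[G]$. Now $O_p=\prod_{\mathfrak{p}|p}O_\mathfrak{p}$ is a product of complete DVRs, so $O_p[G]$ is commutative and semi-local; over such a ring every finitely generated projective module is free, and the rank-one condition follows from $\tilde{\mathcal{T}}\otimes_{O_p}E_p=E_p[G]$. The task therefore reduces to establishing projectivity of $K_{1-2r}(O_F)_{/\mathrm{tors}}\otimes O_p$ as an $O_p[G]$-module.

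The main obstacle is precisely this projectivity. When $p\nmid|G|$, the idempotents $e_\chi\in O_p[G]$ split the lattice into $\bigoplus_\chi a_\chi O_p e_\chi$, and each rank-one $O_p$-summand is free (as $O_p$ is a product of DVRs), so projectivity, hence freeness of rank one over $O_p[G]=\prod_\chi O_p e_\chi$, is immediate. For $p\mid|G|$, projectivity is equivalent to $G$-cohomological triviality, and elementary examples (e.g.\ $G=\mathbb{Z}/2$, $p=2$) show this can fail for generic $O_p[G]$-stable lattices in $E_p[G]$, so the specific structure of the lattice coming from higher $K$-theory must be exploited. I would invoke the Quillen-Lichtenbaum identification of $K_{1-2r}(O_F)\otimes O_p$ (modulo controlled torsion) with the étale cohomology $H^1_{\mathrm{ét}}(O_{F,S_p},O_p(1-r))$ and then apply established cohomological-triviality results for these étale cohomology groups in the abelian setting, under the paper's hypotheses on $p$ (split in $k$ and $p\nmid 6$).
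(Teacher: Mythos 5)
Your part (b) is complete and is essentially the ``straightforward'' argument the paper intends: choose $\delta\in O\setminus\{0\}$ clearing the denominators of all the $a_\chi$, note $|G|e_\chi=\sum_{\sigma\in G}\chi^{-1}(\sigma)\sigma\in O[G]$, and conclude by $O_p$-linearity. Part (a), however, is where the two arguments diverge and where your proposal has a genuine gap. Your reduction is correct as far as it goes: multiplication by $(L^{*}(E(r)_{F},0)^{\#})^{-1}$ is a $G$-equivariant automorphism of $\mathbb{R}[G]$ and $\rho^{r}_{F}$ is injective modulo torsion, so $\tilde{\mathcal{T}}\otimes O_p\cong K_{1-2r}(O_F)_{/\mathrm{tors}}\otimes O_p$ as $O_p[G]$-modules, and freeness is immediate when $p\nmid|G|$. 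But the case $p\mid|G|$, which you rightly identify as the crux, is not proved: you only ``would invoke'' Quillen--Lichtenbaum together with unspecified cohomological-triviality results for $H^{1}_{\mathrm{\acute{e}t}}(O_{F,S_p},O_p(1-r))$. There is no such off-the-shelf result: perfectness of the complex $R\Gamma(O_{F,S},O_p(1-r))$ does not make its individual cohomology groups cohomologically trivial, and cohomological triviality of $H^1$ generally requires ramification hypotheses you have not imposed. Worse, the hypotheses you appeal to ($p$ split in $k$, $p\nmid 6$) are not available at this point of the paper: the proposition feeds into Proposition \ref{theorem4.8}, which is conditional only on the ETNc and is asserted for \emph{all} $p$; the splitting hypotheses enter only at the very end, to invoke the known cases of the ETNc.

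For comparison, the paper does not pass through $K$-theory at all. It takes as input the assertion, made just before the proposition, that $\tilde{\mathcal{T}}\otimes O_p$ admits an $O_p$-basis of eigenvectors $(a_\chi e_\chi)_{\chi\in\hat{G}}$, writes $\tilde{\mathcal{T}}\otimes O_p=\bigl(\sum_{\chi}a_\chi e_\chi\bigr)O_p[G]$, and observes that $\sum_{\chi}a_\chi e_\chi\in E[G]^{\times}$, whence a free rank-one module. (Whether that input, and the step from $\sum_\chi a_\chi e_\chi O_p$ to $\bigl(\sum_\chi a_\chi e_\chi\bigr)O_p[G]$ when $p\mid|G|$, is itself fully justified is a separate question, but that is the route the paper takes.) Your route has the merit of exposing the real content of the statement as a Galois-module-structure assertion about $K_{1-2r}(O_F)$ at primes dividing $|G|$, but as written it does not close that case.
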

\begin{proof}
To prove $(a.)$ it suffices to observe that:
\begin{align*}
\tilde{\mathcal{T}}\otimes O_p&=\Sigma_{\chi\in\hat{G}}a_{\chi}e_\chi O_p\\
&=\frac{(\Sigma_{\chi\in\hat{G}}{\mid}G{\mid}a_{\chi}e_\chi)}{{\mid}G{\mid}}.O_p[G]
\end{align*}
and that $\Sigma_{\chi\in\hat{G}}{\mid}G{\mid}a_{\chi}e_\chi\in E[G]^{\times}$.\\
The claim $(b.)$ is straightforward.
\hspace*{\fill}$\Box$ 
\end{proof}
\begin{remark}[Remark \ref{remarkOf5.5}]\label{remarkOf5.5}
By the same argument provided in the proof of Proposition \ref{prop5.5}, one can see that $\rho^{r}_{F}(K_{1-2r}(O_F))\otimes O_p$ is also a rank-one free $O_p[G]$-module.
\end{remark}
\end{itemize}

\subsection{Proof of Proposition \ref{theorem4.8}}\label{proofPart2}

Recall that $S$ denotes a finite set of places of $k$ containing  the set $S_p$ (of infinite places and $p$-places) and the set of places which ramify in $F/k$.  The ETNc is compatible with the enlargement of the set of primes $S_p$ to $S$ (e.g. \cite{LeungKings}, Lemma 2.3).
The statement of the ETNc for the enlarged set of primes $S$ reads
$$O_{p}[G]\vartheta^{r}_{F, S} \circ (\vartheta^{r}_{F, \infty})^{-1} (L^{*}(E(r)_{F},0)^{-1})=\mathrm{Det}_{O_{p}[G]}R\Gamma_{c}(O_{k,S}, T_p),$$
and 
$$(\vartheta^{r}_{F, S})^{-1}\mathrm{Det}_{O_{p}[G]}R\Gamma_{c}(O_{k,S},T_{p})=\varepsilon_{S_f}(r).(\tilde{\vartheta}^{r}_{F, S})^{-1}R\Gamma_{c}(O_{k,S},T_{p})$$
By dualising the statement of the ETNc, and using Remark \ref{dualisation remark}, we get
\begin{align*}
O_{p}[G](\tilde{\vartheta}^{r}_{F, \infty})^{-1}(L^{*}(E(r)_{F},0)^{\#})&=\frac{1}{\varepsilon_{S_f}(r)^{\#}}(\tilde{\vartheta}^{r}_{F, S})^{\vee}\mathrm{Det}_{O_{p}[G]}R\Gamma_{c}(O_{k,S}, T_p)^{\vee}\\
&=\frac{1}{\varepsilon_{S_f}(r)^{\#}}(\tilde{\vartheta}^{r}_{F, S})^{\vee}\mathrm{Det}_{O_{p}[G]}(R\Gamma_{c}(O_{k,S}, T_p)^{\vee}[-2])
\end{align*}
where, in this case, $(\;)^\vee$ denotes $\mathrm{RHom}(\; , O_p)$.\\
By the distinguished triangle \ref{1}, one has
\begin{itemize}
\item $H^{0}(R\Gamma_{c}(O_{k,S}, T_p)^{\vee}[-2])=H^{1}(O_{F,S}, O_p(1-r))$
\item $H^{1}(R\Gamma_{c}(O_{k,S}, T_p)^{\vee}[-2])=H^{2}(O_{F,S}, O_p(1-r))\oplus (\prod_{\sigma:k\rightarrow \mathbb{C}}T_{p})^{+})^{\vee}$
\item $H^{i}(R\Gamma_{c}(O_{k,S}, T_p)^{\vee}[-2])=0$ for all $i\not = 0, 1$.
\end{itemize}
By subsection \ref{subsub5.3.3}, we know that the isomorphism $\tilde{\vartheta}^{r}_{F, S}$ is induced by the Chern isomorphism $ch$ acting on the motivic cohomology groups (i.e. duals of higher $K$-groups) and the isomorphism $\alpha$ acting on $M_{B}^{+}\otimes E\simeq (Y_{r}(F)^{+}\otimes E)^{\vee}$. The following isomorphisms (with extension of scalars over $O_p$)
\begin{itemize}
\item $H^{1}(O_{F,S}, O_p(1-r))\stackrel{ch^{\vee}}{\xrightarrow{\;\sim\;}} K_{1-2r}(O_{F})\otimes O_p$
\item $H^{2}(O_{F,S}, O_p(1-r))\oplus T_p^{\vee}\stackrel{ch^{\vee}\oplus \alpha^{\vee}}{\xrightarrow{\;\sim\;}} (K_{-2r}(O_{F, S})\otimes O_p)\oplus (Y_{r}(F)^{+}\otimes O_p)$
\end{itemize}
which hold for all $p$ since $k$ is imaginary quadratic, induce in the derived category an isomorphism (i.e. a quasi-isomorphism) between $R\Gamma_{c}(O_{k,S}, T_p)^{\vee}[-2]$ and a complex $C^{\bullet}$ which verifies
\begin{itemize}
\item $H^{0}(C^{\bullet})=K_{1-2r}(O_{F})\otimes O_p$
\item $H^{1}(C^{\bullet})=(K_{-2r}(O_{F, S})\otimes O_p)\oplus (Y_{r}(F)^{+}\otimes O_p)$
\item $H^{i}(C^{\bullet})=0$ for all $i\not = 0, 1$.
\end{itemize}
This is, for example, implicitly mentioned in (\cite{Burns}, Lemma 11.1.1). We, then, have:
$$(\tilde{\vartheta}^{r}_{F, S})^{\vee}\mathrm{Det}_{O_{p}[G]}(R\Gamma_{c}(O_{k,S}, T_p)^{\vee}[-2])=\mathrm{Det}_{O_{p}[G]}(C^{\bullet})$$
Let $\mathbb{C}_p$ be the completion of $\overline{\mathbb{Q}}_p$. The ETNc reads:
$$\varepsilon_{S_f}(r)^{\#}.L^{*}(E(r)_{F},0)^{\#}.O_p[G]=\tilde{\vartheta}^{r}_{F, \infty}\mathrm{Det}_{O_{p}[G]}(C^{\bullet}),$$
where $L^{*}(E(r)_{F},0)^{\#}.O_p[G]\subset \mathbb{C}_p[G]$. Since the Beilinson regulator induces an isomorphism
$$\rho^{r}_{F}: K_{1-2r}(O_{F})_{/\mathrm{tors}}\xrightarrow{\;\sim\;} \rho^{r}_{F}(K_{1-2r}(O_{F}))$$
which provides the isomorphism
$$K_{1-2r}(O_{F})\otimes O_p \xrightarrow{\;\sim\;} K_{1-2r}(O_{F})_{\mathrm{tors}}\otimes O_p \oplus \rho^{r}_{F}(K_{1-2r}(O_{F}))\otimes O_p$$
this induces in the derived category a quasi-isomorphism between $C^{\bullet}$ and a complex $D^{\bullet}$ that verifies 
\begin{itemize}
\item $H^{0}(D^{\bullet})=K_{1-2r}(O_{F})_{\mathrm{tors}}\otimes O_p \oplus \rho^{r}_{F}(K_{1-2r}(O_{F}))\otimes O_p$
\item $H^{1}(D^{\bullet})=(K_{-2r}(O_{F, S})\otimes O_p)\oplus (Y_{r}(F)^{+}\otimes O_p)$
\item $H^{i}(D^{\bullet})=0$ for all $i\not = 0, 1$.
\end{itemize}
and also
$$\tilde{\vartheta}^{r}_{F, \infty}\mathrm{Det}_{O_{p}[G]}(C^{\bullet}):=\mathrm{Det}_{O_{p}[G]}(D^{\bullet}).$$
We carry this process even further: Consider the maps:
\begin{itemize}
\item $\rho^{r}_{F}(K_{1-2r}(O_{F}))\otimes O_p\hookrightarrow K_{1-2r}(O_{F})_{\mathrm{tors}}\otimes O_p \oplus \rho^{r}_{F}(K_{1-2r}(O_{F}))\otimes O_p$ and
\item $\delta{\mid}G\mid.(L^{*}(E(r)_{F},0)^{\#})^{-1}\rho^{r}_{F}(K_{1-2r}(O_{F}))\otimes O_p\hookrightarrow Y_{r}(F)^{+}\otimes O_p$
\end{itemize}
where the element $\delta\in O\backslash\{0\}$ in the second map was introduced in (Proposition \ref{prop5.5}, $(b.)$). This map makes sense thanks to Proposition \ref{prop5.5}, and the fact that $Y_{r}(F)^{+}\otimes O_p\cong O_p[G]$ (see paragraph \ref{Additional considerations}, $(2.)$).

\noindent These two maps induce a morphism $f$ in the derived category, from the perfect complex 
$$Z^{\bullet}: 0\rightarrow \rho^{r}_{F}(K_{1-2r}(O_{F}))\otimes O_p\xrightarrow{0}\delta{\mid}G\mid.(L^{*}(E(r)_{F},0)^{\#})^{-1}\rho^{r}_{F}(K_{1-2r}(O_{F}))\otimes O_p\rightarrow 0$$
where the non-zero modules are placed in degrees $0$ and $1$ respectively, to the complex $D^{\bullet}$. The mapping cone of $f$ is therefore also perfect, and we have
\begin{align*}
\mathrm{Det}_{O_p[G]}D^\bullet&=\mathrm{Det}_{O_p[G]}Z^{\bullet}\otimes_{O_p[G]}\mathrm{Det}_{O_p[G]}\mathrm{Cone}(f)\\
&=\frac{L^{*}(E(r)_{F},0)^{\#}}{\delta{\mid}G\mid}.\mathrm{Det}_{O_p[G]}\mathrm{Cone}(f)
\end{align*}
We can observe now the following
\begin{itemize}
\item[1-] The ETNc holds if and only if $\mathrm{Det}_{O_p[G]}\mathrm{Cone}(f)=\delta{\mid}G\mid.\varepsilon_{S_f}(r)^{\#}.O_p[G]$.
\item[2-] The complex $\mathrm{Cone}(f)$ has finite cohomology groups as follows
\begin{itemize}
\item $H^{0}(\mathrm{Cone}(f))=K_{1-2r}(O_{F})_{\mathrm{tors}}\otimes O_p$
\item $H^{1}(\mathrm{Cone}(f))=(K_{-2r}(O_{F, S})\otimes O_p)\oplus \frac{O_p[G]}{\delta{\mid}G\mid.(L^{*}(E(r)_{F},0)^{\#})^{-1}\rho^{r}_{F}(K_{1-2r}(O_{F}))\otimes O_p}$
\item $H^{i}(\mathrm{Cone}(f))=0$ for all $i\not = 0, 1$.
\end{itemize}
\end{itemize}
Recall that (see e.g. the discussion after Lemma 3 of \cite{burnsFlach} and also paragraph $3$ of \cite{Burns}), given any perfect complex $X$ of $O_p[G]$-modules with just two nonzero cohomology groups $H^{0}(X)$ and $H^{1}(X)$, the element in the derived category which corresponds to $X$ can be represented as a Yoneda extension class
$$e(X)\in \mathrm{Ext}^{2}_{G}(H^{1}(X), H^{0}(X)),$$
a representative of which can be chosen as a $2$-extension (exact sequence)
$$0\rightarrow H^{0}(X)\rightarrow A\rightarrow B\rightarrow H^{1}(X)\rightarrow 0$$ 
with $A$ and $B$ both perfect $O_p[G]$-modules.\\
Now our proof will follow the steps of (\cite{burnsGreither2}, proof of Corollary $2$, p. 181-183). Let $\mathcal{D}^{\mathrm{p}, \mathrm{f}}(O_p[G])$ denote the category of perfect complexes of $O_p[G]$-modules with finite cohomology groups. Since 
$\mathrm{Cone}(f)\in \mathcal{D}^{\mathrm{p}, \mathrm{f}}(O_p[G])$ and the cohomology  of $\mathrm{Cone}(f)$ is nonzero only in degrees $0$ and $1$, there exists and exact sequence of $O_p[G]$-modules:
\begin{equation}\label{2-extension}
0\rightarrow H^{0}(\mathrm{Cone}(f))\rightarrow Q\xrightarrow{d} Q^{'}\rightarrow H^{1}(\mathrm{Cone}(f))\rightarrow 0
\end{equation}
which is such that both $Q$ and $Q^{'}$ are finite and of projective dimension at most $1$ over $O_p[G]$ (observe that this is exactly like the sequence $(14)$ of \cite{burnsGreither2} but with a shift), and there exists an isomorphism in $\mathcal{D}^{\mathrm{p}, \mathrm{f}}(O_p[G])$ between $\mathrm{Cone}(f)$ and the complex $Q\xrightarrow{d} Q^{'}$ (where the modules are placed in degrees $0$ and $1$, and the cohomology is identified with $H^{0}(\mathrm{Cone}(f))$ and $H^{1}(\mathrm{Cone}(f))$. Thus:
$$\mathrm{Det}_{O_p[G]}\mathrm{Cone}(f)=\mathrm{Fitt}_{O_p[G]}(Q)^{-1}\mathrm{Fitt}_{O_p[G]}(Q^{'})$$
We can apply now Lemma $5$ of \cite{burnsGreither2} to the exact sequence (\ref{2-extension}). This shows that the ETNc is equivalent to the following 
$$\delta{\mid}G\mid\varepsilon_{S_f}(r)^{\#}.\mathrm{Fitt}_{O_p[G]}(H^{0}(\mathrm{Cone}(f))^{*})=\mathrm{Fitt}_{O_p[G]}(H^{1}(\mathrm{Cone}(f))$$
where $(\;)^{*}$ denotes the Pontryagin dual, or equivalently
\small
$$\rho^{r}_{F}(K_{1-2r}(O_{F})).\mathrm{Fitt}_{O_p[G]}(K_{-2r}(O_{F, S})\otimes O_p)=\varepsilon_{S_f}(r)^{\#}.L^{*}(E(r)_{F},0)^{\#}.\mathrm{Fitt}_{O_p[G]}\Big((K_{1-2r}(O_{F})_{\mathrm{tors}}\otimes O_p)^{*}\Big)$$
\normalsize{Now}, since $(K_{1-2r}(O_{F})\otimes \mathbb{Z}_p)_{\mathrm{tors}}$ is a finite subgroup of $\mathbb{Q}_p/\mathbb{Z}_p(1-r)$, the $O_p[G]$-module $K_{1-2r}(O_{F})_{\mathrm{tors}}\otimes O_p$ is a cyclic group. We can use this fact and (Remark $9$ of \cite{burnsGreither2}, p. 183) to show that
$$\mathrm{Fitt}_{O_p[G]}\Big((K_{1-2r}(O_{F})_{\mathrm{tors}}\otimes O_p)^{*}\Big)=\mathrm{Fitt}_{O_p[G]}(K_{1-2r}(O_{F})_{\mathrm{tors}}\otimes O_p)$$.
\hspace*{\fill}$\Box$
\begin{corollary}\label{part2CorollaryETNc}
If the ETNc (i.e. Conjecture \ref{conjecture4.3}) holds, then part (2) of Theorem \ref{Thm3.1} is true for all rational primes $p$.
\end{corollary}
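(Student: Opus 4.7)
The plan is to derive the corollary directly from Proposition~\ref{theorem4.8} by passing to the $\chi$-isotypic component of the equality of $O_p[G]$-modules, and then reinterpreting each factor in terms of the quantities appearing in Theorem~\ref{Thm3.1}~(2).

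Starting from the identity of Proposition~\ref{theorem4.8}, I would multiply both sides by the primitive central idempotent $e_\chi$. By Remark~\ref{remarkOf5.5}, the module $\rho^r_{F,p}(K_{1-2r}(O_F)\otimes O_p)$ is free of rank one over $O_p[G]$, so its $e_\chi$-projection is a cyclic $e_\chi O_p[G]$-module inside $e_\chi E_p[G]\cong E_p$. Producing the element $\epsilon(\chi,S,p)$ therefore reduces to showing that $w_{1-r}(F^{\mathrm{ker}(\chi)})L'_S(r,\chi^{-1})|G|e_\chi$ lies in $e_\chi\rho^r_{F,p}(K_{1-2r}(O_F)\otimes O_p)$; uniqueness follows from the injectivity of $\rho^r_{F,p}$ on $K_{1-2r}(O_F)_{/\mathrm{tors}}\otimes O_p$.

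The next step is to identify the $\chi$-component of the right-hand side of Proposition~\ref{theorem4.8}. The definition $L^*(E(r)_F,0)=\sum_\chi L^*(r,\chi)e_\chi$ together with $e_\chi^{\#}=e_{\chi^{-1}}$ gives $e_\chi L^*(E(r)_F,0)^{\#}=L^*(r,\chi^{-1})e_\chi$. Using $e_\chi g^{-1}=\chi^{-1}(g)e_\chi$ and the fact that $e_\chi e_{I_v}$ equals $e_\chi$ when $\chi|_{I_v}=1$ and $0$ otherwise, one computes
\[
e_\chi \varepsilon_{S_f}(r)^{\#}=\prod_{\substack{v\in S_f\\\chi|_{I_v}=1}}(1-\chi^{-1}(\mathrm{Frob}_v)\mathbf{N}v^{-r})\,e_\chi,
\]
so that the analytic factor on the right-hand side becomes $L^*_S(r,\chi^{-1})e_\chi$, which equals $L'_S(r,\chi^{-1})e_\chi$ since $L_S(s,\chi^{-1})$ has a simple zero at $s=r$ for $k$ imaginary quadratic. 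On the Fitting-ideal side, I would use that $K_{1-2r}(O_F)_{\mathrm{tors}}\otimes O_p$ is cyclic as an abelian group (as observed at the end of the proof of Proposition~\ref{theorem4.8}), being a Galois-stable subgroup of $\mathbb{Q}_p/\mathbb{Z}_p(1-r)\otimes O_p$ on which $G$ acts by a power of the cyclotomic character. Its $O_p[G]$-Fitting ideal coincides with its annihilator, and the $e_\chi$-projection of this annihilator produces a generator of the form $w_{1-r}(F^{\mathrm{ker}(\chi)})|G|e_\chi$, where the factor $|G|$ arises because $e_\chi=|G|^{-1}\sum_\sigma\chi^{-1}(\sigma)\sigma$ is not integral in $O_p[G]$.

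Inserting these identifications into the $e_\chi$-projection of Proposition~\ref{theorem4.8}, and using that $\mathrm{Fitt}_{O_p[G]}(K_{-2r}(O_{F,S})\otimes O_p)$ is integral (so that its $e_\chi$-projection lies in $e_\chi O_p[G]$), forces $w_{1-r}(F^{\mathrm{ker}(\chi)})L'_S(r,\chi^{-1})|G|e_\chi$ to belong to $e_\chi\rho^r_{F,p}(K_{1-2r}(O_F)\otimes O_p)$, yielding the desired $\epsilon(\chi,S,p)$. The main obstacle is the careful bookkeeping of $|G|$-denominators when passing between integral structures over $O_p[G]$ and their $e_\chi$-projections, especially when $p$ divides $|G|$ and $e_\chi$ is not integral in $O_p[G]$; the factor $|G|$ in the conjectural formula is precisely the reconciling term between these two normalisations, and its appearance must be tracked through each of the Fitting-ideal and Euler-factor identifications.
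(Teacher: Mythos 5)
Your overall strategy is the one the paper uses: take the module identity of Proposition \ref{theorem4.8}, project to the $\chi$-component, use the integrality of $\mathrm{Fitt}_{O_p[G]}(K_{-2r}(O_{F,S})\otimes O_p)$ to land inside the image of the regulator, and identify the analytic factor with $L'_S(r,\chi^{-1})$ and the torsion Fitting ideal with the $w_{1-r}$-invariant. Your identifications of the Euler factors, of $L^*_S=L'_S$ in the imaginary quadratic case, and of the cyclicity of $K_{1-2r}(O_F)_{\mathrm{tors}}\otimes O_p$ all match what the paper does.

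There is, however, one genuine gap in the step where you pass to the $\chi$-component. You multiply the identity $A=B$ of Proposition \ref{theorem4.8} by $e_\chi$ and conclude that the target element lies in $e_\chi\rho^r_{F,p}(K_{1-2r}(O_F)\otimes O_p)$. That is strictly weaker than what Theorem \ref{Thm3.1}(2) demands: you need the target to lie in $\rho^r_{F,p}(K_{1-2r}(O_F)\otimes O_p)$ itself, and when $p\mid{|G|}$ the idempotent $e_\chi$ is not in $O_p[G]$, so $e_\chi M\not\subseteq M$ for an $O_p[G]$-lattice $M$ in general; membership in the $e_\chi$-projection does not give you an actual preimage $\epsilon(\chi,S,p)$ in $K_{1-2r}(O_F)\otimes O_p$. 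You correctly sense that the factor $|G|$ in the conjectural formula is the reconciling term, but you never cash this in. The paper's proof does exactly that: since ${|G|}e_\chi\in O[G]\subseteq O_p[G]$, one has ${|G|}e_\chi B\subseteq B=A\subseteq\rho^r_{F,p}(K_{1-2r}(O_F)\otimes O_p)$, so it suffices to exhibit $w_{1-r}(F^{\mathrm{ker}(\chi)})L'_S(r,\chi^{-1}){|G|}e_\chi$ as an element of ${|G|}e_\chi B={|G|}\,\chi(\varepsilon_{S_f}(r)^{\#})L'(r,\chi^{-1})\bigl(e_\chi\mathrm{Fitt}_{O_p[G]}(K_{1-2r}(O_F)_{\mathrm{tors}}\otimes O_p)\bigr)$, which reduces to the $w_{1-r}$-identification you already have. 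Replacing your $e_\chi$-projection by multiplication by the integral element ${|G|}e_\chi$ throughout closes the gap; relatedly, your assertion that the $e_\chi$-projection of the annihilator is generated by $w_{1-r}(F^{\mathrm{ker}(\chi)}){|G|}e_\chi$ conflates the projection (whose generator has no $|G|$) with the integral element obtained after multiplying by ${|G|}e_\chi$.
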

\begin{proof}
Let : 
\begin{itemize}
\item $A:=\rho^{r}_{F, p}(K_{1-2r}(O_{F})\otimes O_p).\mathrm{Fitt}_{O_p[G]}(K_{-2r}(O_{F, S})\otimes O_p)$
\item $B:=
\varepsilon_{S_f}(r)^{\#}.L^{*}(E(r)_{F},0)^{\#}.\mathrm{Fitt}_{O_p[G]}(K_{1-2r}(O_{F})_{\mathrm{tors}}\otimes O_p)$
\end{itemize}
Then Proposition \ref{theorem4.8} implies that $A=B$. Since ${\mid}G{\mid}e_\chi\in O[G]$ for all $\chi\in\hat{G}$, we get
$${\mid}G{\mid}e_{\chi}B\subseteq A \subseteq \rho^{r}_{F, p}(K_{1-2r}(O_{F})\otimes O_p)$$
Yet ${\mid}G{\mid}e_{\chi}B={\mid}G{\mid}.\chi(\varepsilon_{S_f}(r)^{\#}).L^{'}(r,\chi^{-1}).\big{(}e_\chi\mathrm{Fitt}_{O_p[G]}(K_{1-2r}(O_{F})_{\mathrm{tors}}\otimes O_p)\big{)}$. The proof ends by noticing that
$$w_{1-r}(F^{\mathrm{ker}(\chi)})L_{S}^{'}(r,\chi^{-1}){\mid}G{\mid} e_{\chi}\in{\mid}G{\mid}.\chi(\varepsilon_{S_f}(r)^{\#}).L^{'}(r,\chi^{-1}).\big{(}e_\chi\mathrm{Fitt}_{O_p[G]}(K_{1-2r}(O_{F})_{\mathrm{tors}}\otimes O_p)\big{)}$$
\hspace*{\fill}$\Box$
\end{proof}
\section{Conclusion of Sections \ref{SectionPart1ETNc} and \ref{section5}}
By combining Corollary \ref{corollary4.5} and Corollary \ref{part2CorollaryETNc} we get the following:
\begin{corollary}\label{ConclusionCorollary}
Suppose that the ETNc (conjecture \ref{conjecture4.3.4}) holds for $E(r)_{F}$. Then Statements (1) and (2) of Theorem \ref{Thm3.1} hold For all rational primes $p$.
\end{corollary}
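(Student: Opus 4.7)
My plan is to derive this corollary by directly combining the two preceding corollaries, since the genuine content has already been extracted in Sections \ref{SectionPart1ETNc} and \ref{section5}. The key preliminary observation is that Conjecture \ref{conjecture4.3.4}.2 is formulated as a conditional statement whose standing hypothesis is Conjecture \ref{conjecture4.3.1}.1. Therefore, assuming the full ETNc as a theorem automatically forces the rationality part to hold as well, so both inputs needed below are simultaneously available.

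With that in hand, I would first invoke Corollary \ref{corollary4.5}: since the rational part of the ETNc holds, Statement (1) of Theorem \ref{Thm3.1} is valid for every rational prime $p$, i.e.\ the element $w_{1-r}(F^{\mathrm{ker}(\chi)})L^{'}_{S}(r,\chi^{-1}){\mid}G{\mid} e_{\chi}$ lies in $\mathcal{T}\otimes E$. Next, I would invoke Corollary \ref{part2CorollaryETNc}, which is tailored to say exactly that the full ETNc yields Statement (2) of Theorem \ref{Thm3.1} for every rational prime $p$. Concatenating the two conclusions gives precisely the assertion of Corollary \ref{ConclusionCorollary}, so no further manipulation is required.

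Consequently, this corollary is a packaging statement and there is no real obstacle at this stage. The substantive difficulty was already absorbed upstream: in Proposition \ref{theorem4.4} (reducing the rationality part to a one-dimensional eigenspace argument via the Beilinson regulator) for the first half, and in Proposition \ref{theorem4.8} (translating the ETNc through the distinguished triangle of Lemma \ref{lemma4.6}, the $\tilde{\vartheta}^{r}_{F, S}$ versus $\vartheta^{r}_{F, S}$ comparison of Section \ref{subsub5.3.3}, and the Yoneda/Fitting ideal computation modelled on \cite{burnsGreither2}) for the second. Once those are in place, the present corollary is just the juxtaposition of their conclusions.
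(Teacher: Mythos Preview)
Your proposal is correct and matches the paper's own proof, which simply states that the corollary follows by combining Corollary \ref{corollary4.5} and Corollary \ref{part2CorollaryETNc}. Your additional remark that assuming the full ETNc forces the rationality hypothesis (Conjecture \ref{conjecture4.3.1}.1) to hold is exactly the implicit step needed to invoke both corollaries simultaneously.
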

\subsection{Proof of Theorem \ref{Thm3.1}}
Theorem \ref{Thm3.1} is a consequence of Corollary \ref{ConclusionCorollary} and the fact that the ETNc holds in the case of $k$ imaginary quadratic for all rational primes $p$ which split in $k$ and such that $p\nmid 6$ (by \cite{Leung}, Main Theorem, p. 4).\hspace*{\fill}$\Box$

\end{document}